\documentclass[11pt]{amsart}

\usepackage{amsfonts}
\usepackage{amssymb}
\usepackage{hyperref}
\usepackage{stmaryrd}

\usepackage{tikz}
\usetikzlibrary{matrix,arrows,decorations.pathmorphing}

\theoremstyle{plain}
\newtheorem{theorem}{Theorem}[section]
\newtheorem{lem}[theorem]{Lemma}
\newtheorem{cor}[theorem]{Corollary}
\newtheorem{prop}[theorem]{Proposition}

\theoremstyle{definition}
\newtheorem{defi}[theorem]{Definition}
      
\theoremstyle{remark}

\newcommand{\lemref}[1]{\hyperref[#1]{Lemma \ref*{#1}}}
\newcommand{\thmref}[1]{\hyperref[#1]{Theorem \ref*{#1}}}
\newcommand{\propref}[1]{\hyperref[#1]{Proposition \ref*{#1}}}
\newcommand{\corref}[1]{\hyperref[#1]{Corollary \ref*{#1}}}
\newcommand{\defref}[1]{\hyperref[#1]{Definition \ref*{#1}}}
\newcommand{\remref}[1]{\hyperref[#1]{Remark \ref*{#1}}}
\newcommand{\conjref}[1]{\hyperref[#1]{Conjecture \ref*{#1}}}

\newcommand{\llb}{\llbracket}
\newcommand{\rrb}{\rrbracket}

\catcode`\@=11
\newdimen\cdsep
\cdsep=3em

\def\cdstrut{\vrule height .6\cdsep width 0pt depth .4\cdsep}
\def\@cdstrut{{\advance\cdsep by 2em\cdstrut}}

\def\arrow#1#2{
  \ifx d#1
    \llap{$\scriptstyle#2$}\left\downarrow\cdstrut\right.\@cdstrut\fi
  \ifx u#1
    \llap{$\scriptstyle#2$}\left\uparrow\cdstrut\right.\@cdstrut\fi
  \ifx r#1
    \mathop{\hbox to \cdsep{\rightarrowfill}}\limits^{#2}\fi
  \ifx l#1
    \mathop{\hbox to \cdsep{\leftarrowfill}}\limits^{#2}\fi
}
\catcode`\@=12

\cdsep=3em

\def \F {\mathbb{F}}

\makeatletter
\newcommand*{\defeq}{\mathrel{\rlap{%
                     \raisebox{0.27ex}{$\m@th\cdot$}}%
                     \raisebox{-0.27ex}{$\m@th\cdot$}}%
                     =}

\numberwithin{equation}{section}
\makeatother

\makeatletter
\def\@setcopyright{}
\def\serieslogo@{}
\makeatother

\input xy
\xyoption{all} 
 
\begin{document}

\title{The Hanna Neumann conjecture for Demushkin Groups}

\author{Andrei Jaikin-Zapirain}
\address{Departamento de Matem\'{a}ticas, Universidad Aut\'{o}noma de Madrid and Instituto de Ciencias Matem\'{a}ticas, CSIC-UAM-UC3M-UCM}
\email{andrei.jaikin@uam.es}

\author{Mark Shusterman}
\address{Raymond and Beverly Sackler School of Mathematical Sciences, Tel-Aviv University, Tel-Aviv, Israel}
\email{markshus@mail.tau.ac.il}

\date{}
\begin{abstract}

We confirm the Hanna Neumann conjecture for topologically finitely generated closed subgroups $U$ and $W$ of a nonsolvable Demushkin group $G$. Namely, we show that
\begin{equation*}
\sum_{g \in U \backslash G/W} \bar d(U \cap gWg^{-1}) \leq \bar d(U) \bar d(W)
\end{equation*}
where $\bar d(K) = \max\{d(K) - 1, 0\}$ and $d(K)$ is the least cardinality of a topological generating set for the group $K$.

\end{abstract}

\maketitle

\section{Introduction}

Howson has shown in \cite{Hows} that the intersection of two finitely generated subgroups $U,W$ of a free group $F$ is finitely generated.
The problem of obtaining the optimal bound on the number of generators of the intersection has been posed by Hanna Neumann in \cite{Nh}. 
She conjectured that
\begin{equation}
\bar d(U \cap W) \leq \bar d(U) \bar d(W).
\end{equation}
A lot of works on the conjecture followed, and in particular, Walter Neumann conjectured in \cite{Nw} that the strengthened inequality
\begin{equation}
\sum_{g \in U \backslash F/W} \bar d(U \cap gWg^{-1}) \leq \bar d(U) \bar d(W)
\end{equation}
holds.
This strengthened conjecture motivated a long line of works that culminated in solutions by Friedman in \cite{F} and Mineyev in \cite{M}.

The pro-$p$ analog of the Hanna Neumann conjecture has a similar timeline.
Howson's theorem for free pro-$p$ groups has been established by Lubotzky in \cite{Lub},
and the strengthened Hanna Neumann conjecture for these groups has been obtained in \cite{J} by Jaikin-Zapirain, whose arguments led to a new proof of the original strengthened Hanna Neumann conjecture.

In this work we focus on Demushkin groups (pro-$p$ Poincar\'{e} duality groups of dimension $2$). These are finitely generated one-relator pro-$p$ groups $G$ for which the cup product
\begin{equation}
\cup \colon H^1(G,\mathbb{F}_p) \times H^1(G,\mathbb{F}_p) \to H^2(G,\mathbb{F}_p)
\end{equation}
is non-degenerate.
Demushkin groups appear in arithmetic algebraic geometry as maximal pro-$p$ quotients of \'{e}tale fundamental groups,
in combinatorial group theory as pro-$p$ completions of surface groups, and in number theory as Galois groups of maximal $p$-extensions of $p$-adic fields.
This number theoretic appearance (and its variants) are responsible for the attention payed to the properties of Demushkin groups both in classical textbooks on Galois cohomology such as \cite{NSW, Ser} and in modern research works in Galois theory such as \cite{BLMS, B0, B, Ef2, LLMS, MW0, MW, Wing, Y}.

Demushkin groups were also studied for their own sake, for instance, in \cite{SerD} by Serre and in \cite{Lab0, Lab} by Labute.
Their group theoretic properties continue to attract attention as can be seen from \cite{DL, Koc, KS, KZ0, KZ1, Sh, ShZ, SZ, Son}.
In particular, Howson's theorem for these groups has been obtained  by Shusterman and Zalesskii in \cite{ShZ}.
It is therefore very natural to ask whether the Hanna Neumann conjecture is true also for Demushkin groups.
We answer the strengthened form of this question in the affirmative.

\begin{theorem} \label{FirstRes}

Let $G$ be a nonsolvable Demushkin group, let $U$ and $W$ be two closed topologically finitely generated nontrivial subgroups of $G$, and set
\begin{equation}
S \defeq \{g \in U \backslash G/ W \ | \ U \cap gWg^{-1} \neq 1 \}.
\end{equation}
Then $S$ is finite and
\begin{equation} \label{HNG}
\sum_{g \in S} \big( d(U \cap gWg^{-1}) - 1 \big) \leq \big(d(U)-1\big) \big(d(W)-1\big). 
\end{equation}

\end{theorem}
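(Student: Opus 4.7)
My plan is to follow the dimension-theoretic strategy of Jaikin-Zapirain \cite{J}, which settled the corresponding statement for free pro-$p$ groups. The central object there is a Sylvester rank function on the completed group algebra $\Lambda_G \defeq \mathbb{F}_p\llbracket G \rrbracket$ --- equivalently, an embedding into a skew field $\mathcal{D}_G$ --- inducing a faithful additive dimension on finitely presented $\Lambda_G$-modules. The nonsolvability hypothesis on the Demushkin group $G$ is exactly what one should need to construct such a $\mathcal{D}_G$, playing a role analogous to non-amenability in the $L^2$-Betti number setting.

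The first step would be to translate $\bar d(K) = d(K) - 1$ for a topologically finitely generated closed subgroup $K \leq G$ into a $\mathcal{D}_G$-dimension. From the canonical exact sequence $0 \to \mathfrak{a}_K \to \Lambda_K \to \mathbb{F}_p \to 0$ one expects an identity of the form $\bar d(K) = \dim_{\mathcal{D}_G}\bigl(\mathcal{D}_G \otimes_{\Lambda_K} \mathfrak{a}_K\bigr)$. The crucial input here is the dichotomy that every topologically finitely generated closed subgroup of a Demushkin group is either free pro-$p$ (infinite index) or again Demushkin (finite index); both cases admit an explicit short projective resolution of $\mathbb{F}_p$ over $\Lambda_K$.

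The second step would exploit the Poincar\'{e} duality structure on $G$ to upgrade the one-relator presentation into a length-two exact sequence
\begin{equation*}
0 \to \Lambda_G \to \Lambda_G^{d(G)} \to \Lambda_G \to \mathbb{F}_p \to 0,
\end{equation*}
which makes $\bar d(G)$ accessible cohomologically and is what sharply distinguishes Demushkin groups from general one-relator pro-$p$ groups. The exactness on both ends (rather than just right-exactness of a free presentation) is what will let dimension inequalities become equalities at the top of the calculation.

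Finally, to obtain the strengthened inequality summed over double cosets, I would apply a Mackey-type decomposition
\begin{equation*}
\mathbb{F}_p \otimes^{\mathbf{L}}_{\Lambda_U} \Lambda_G \otimes^{\mathbf{L}}_{\Lambda_W} \mathbb{F}_p \;\cong\; \bigoplus_{g \in U\backslash G/W} \mathbb{F}_p \otimes^{\mathbf{L}}_{\Lambda_{U \cap gWg^{-1}}} \mathbb{F}_p
\end{equation*}
to the resolutions built above, take $\mathcal{D}_G$-dimensions termwise, and extract the desired bound; finiteness of $S$ should fall out automatically from finiteness of the total dimension on the left. The main obstacle, as I see it, is the construction of the division ring $\mathcal{D}_G$ with the correct compatibility properties --- in particular, a Hughes-free behaviour with respect to each closed subgroup algebra $\Lambda_K$ --- which is the pro-$p$ analog of the strong Atiyah conjecture and is genuinely delicate outside the free setting. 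Without this, neither the numerical identification $\bar d(K) = \dim_{\mathcal{D}_G}(\mathcal{D}_G \otimes_{\Lambda_K} \mathfrak{a}_K)$ nor the Mackey decomposition at the level of dimensions is available.
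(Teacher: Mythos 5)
Your plan defers the entire difficulty to an ingredient that is neither available in the literature nor constructed in the paper: a division ring $\mathcal{D}_G$ for $\F_p \llb G \rrb$ with Hughes-free/Sylvester-rank compatibility along all closed subgroups. You acknowledge this, but the point is that the paper never produces such a $\mathcal{D}_G$; it deliberately works with a softer invariant, the homology gradients $\beta_0^G(M)$ and $\beta_1^G(M)$ defined as infima of $\dim_{\F_p} H_i(H,M)/[G:H]$ over open subgroups $H$, and the only Atiyah-type input it needs is \emph{integrality of $\beta_0$ on finitely generated modules}. That integrality is obtained by a structural argument: using Labute's classification, every torsion-free Demushkin group is shown to be residually torsion-free $p$-adic analytic (the quotients being $\mathbb{Z}_p^{m-1}\rtimes\mathbb{Z}_p$), and the known Atiyah conjecture for torsion-free $p$-adic analytic groups is then imported. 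Note also that you attach the nonsolvability hypothesis to the wrong step: the integrality/zero-divisor statements hold for \emph{all} torsion-free Demushkin groups, while nonsolvability is what makes $S$ finite and infinite-index subgroups free (the $\mathbb{Z}_3\rtimes\mathbb{Z}_3$ example shows $S$ can be infinite otherwise). A smaller symptom: for $K$ free pro-$p$ of rank $d$, $\dim_{\mathcal{D}_G}(\mathcal{D}_G\otimes_{\Lambda_K}\mathfrak{a}_K)=d$, not $d-1$, so your proposed dimension identity for $\bar d(K)$ is off by one; the correct analogue is the gradient identity $\beta_1^G(\F_p\llb G/K\rrb)=d(K)-1$ of \corref{Reason}, which uses Schreier's formula for the free group $K$.

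Even granting $\mathcal{D}_G$, your final step conceals the actual theorem. Taking dimensions termwise in the Mackey decomposition only identifies the left-hand side of \eqref{HNG} as a dimension (this is the easy half, equation \eqref{Eq3} and \corref{Reason}); it does not produce the \emph{product} bound $\bar d(U)\bar d(W)$. The whole content is the submultiplicativity $\beta_1^G(\F_p\llb G/U\rrb\ \widehat\otimes_{\F_p}\ \F_p\llb G/W\rrb)\le\beta_1^G(\F_p\llb G/U\rrb)\,\beta_1^G(\F_p\llb G/W\rrb)$, which the paper obtains by finding, after passing to an open subgroup $H$, an $H$-submodule $N\subseteq\F_p\llb G/U\rrb$ of codimension at most $\beta_1^G(\F_p\llb G/U\rrb)$ with $\beta_1^H(N)=0$ (\propref{EffProp}, \corref{VVCor}); this in turn rests on the vanishing of $\beta_1$ on one-relator modules (\corref{NoBeta1For1Relator}, where integrality is used) and on the fact that $\F_p\llb G/U\rrb$ is virtually one-relator because $G$ has $\dim H_2(G,\F_p)=1$ and cohomological dimension $2$. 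Nothing in your sketch generates the product structure. Likewise, finiteness of $S$ does not ``fall out automatically'': to even speak of a finite total dimension you must first know $\dim_{\F_p} H_1(U,\F_p\llb G/W\rrb)<\infty$, and proving that is precisely the content of the paper's Section~3, which needs a genuinely new covering argument (\lemref{BettiLem} together with \corref{FinSCor}) to trap the relevant double cosets inside a free pro-$p$ subgroup of infinite index and reduce to the free case --- the free-case shortcut (virtual freeness of $\F_p\llb G/W\rrb$ as a $U$-module) is simply unavailable for Demushkin groups.
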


Note that since Demushkin groups contain free pro-$p$ groups, this theorem extends Jaikin-Zapirain's result from \cite{J}.
Our assumption that $G$ is nonsolvable is necessary, since otherwise one can take $U = W \cong \mathbb{Z}_3$
in $G \defeq U \rtimes \mathbb{Z}_3$ and, in this case, $S$ is infinite. 
If we take two open subgroups in $G$, then $S$ is finite but \eqref{HNG} may fail to hold.
By Labute's classification from \cite{Lab}, the nonsolvability of $G$ is tantamount to $d(G) > 2$ (but we shall not use this fact).

As already noted in \cite{Nw}, the case where either $U$ or $W$ is open in $G$ reduces to a simple calculation (using \eqref{RankFor} in our case), so we shall assume throughout that the indices $[G: U]$ and $[G:W]$ are infinite.

Even though the possibility of extending the Hanna Neumann conjecture to (discrete) surface groups has already been considered in \cite{Som},
\thmref{FirstRes} is the first extension of the conjecture to groups that are not free.

Our proof builds on ideas from the aforementioned work \cite{J} of Jaikin-Zapirain.
As in \cite{J} (and in other proofs of the Hanna Neumann conjecture) we introduce some (analog of an) $L^2$-invariant,
and reduce the conjecture to a certain submultiplicativity property of this invariant.
A difficulty then arises as the arguments of \cite{J} are based on the fact that $\mathbb{F}_p \llb G/W\rrb$ is a virtually free $G$-module, once $G$ is a free pro-$p$ group. Even the finiteness of $S$ does not immediately carry over to the Demushkin case.

As a first substitute for virtual freeness, we generalize the arguments from the proof of Howson's theorem for Demushkin groups (by Shusterman and Zalesskii), deducing the finiteness of $S$ by a tricky reduction to the free pro-$p$ case.
The second substitute is that $\mathbb{F}_p \llb G/W\rrb$ is virtually a one-relator $G$-module, once $G$ is a Demushkin group.
In order to show that our `$L^2$-invariant' vanishes on one-relator modules, 
and for other key arguments in the proof (in the spirit of \cite{J}), we need to establish (an analogue of) the Atiyah conjecture for Demushkin groups (see Section 5.2).

For free pro-$p$ groups,
the Atiyah conjecture is deduced in \cite{J} from the fact that the consecutive quotients in the descending central series are torsion-free.
By \cite{KKL}, this does not generalize to Demushkin groups.
As a replacement, we show that any pro-$p$ Demushkin group is an inverse limit of groups obtained from copies of $\mathbb{Z}_p$ by semi-direct products.
The Atiyah conjecture is deduced from that.
Along the way we also obtain the following.

\begin{theorem}

The Kaplansky zero-divisor conjecture over $\mathbb{F}_p$ is true for any (torsion-free) pro-$p$ Demushkin group $G$. Namely, the completed group algebra $\F_p \llb G \rrb$ has no non-trivial zero-divisors.

\end{theorem}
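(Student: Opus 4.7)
The plan is to exploit the structural decomposition (asserted in the introduction and proved later in the paper) expressing an arbitrary pro-$p$ Demushkin group as $G=\varprojlim_{n}G_{n}$, where each $G_{n}$ is a torsion-free pro-$p$ group built from finitely many copies of $\Z_{p}$ via iterated semi-direct products. This reduces the Kaplansky zero-divisor statement for $\F_{p}\llb G\rrb$ to the corresponding statement for each $\F_{p}\llb G_{n}\rrb$, together with a limiting argument.

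First I verify that $\F_{p}\llb G_{n}\rrb$ is a domain for every $n$, by induction on the number of $\Z_{p}$-factors. The base case $\F_{p}\llb\Z_{p}\rrb\cong\F_{p}[[T]]$ is immediate. For the inductive step, with $G_{n}=H\rtimes\Z_{p}$ and $\F_{p}\llb H\rrb$ already a domain, one realizes $\F_{p}\llb G_{n}\rrb$ as a skew formal power series ring in the Iwasawa variable $T=\gamma-1$ over $\F_{p}\llb H\rrb$, where $\gamma$ is a topological generator of the $\Z_{p}$-factor. Choosing a suitable filtration whose associated graded is a twisted polynomial ring over $\F_{p}\llb H\rrb$, and noting that this graded ring is a domain, one deduces that $\F_{p}\llb G_{n}\rrb$ itself is a domain.

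Second I pass to the inverse limit. Since $G=\varprojlim_{n}G_{n}$ as profinite groups, a standard cofinality check (every open normal subgroup of $G$ contains the pullback of some open normal subgroup of some $G_{n}$) identifies $\F_{p}\llb G\rrb$ with $\varprojlim_{n}\F_{p}\llb G_{n}\rrb$, so in particular the kernels of the projections $\pi_{n}\colon\F_{p}\llb G\rrb\to\F_{p}\llb G_{n}\rrb$ intersect in zero. If $\alpha,\beta\in\F_{p}\llb G\rrb$ were nonzero with $\alpha\beta=0$, one could choose $n$ large enough that $\pi_{n}(\alpha)\neq 0$ and $\pi_{n}(\beta)\neq 0$; the relation $\pi_{n}(\alpha)\pi_{n}(\beta)=0$ would then contradict the first step.

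The main obstacle I anticipate is executing the inductive step cleanly. Multiplication in $\F_{p}\llb H\rtimes\Z_{p}\rrb$ is governed by the identity $T\cdot h=\phi(h)T+(\phi(h)-h)$, where $\phi$ is the automorphism of $H$ induced by conjugation by $\gamma$, so the Iwasawa algebra is not a bare skew power series ring but additionally carries a $\phi$-derivation. Pinning down a filtration in which this derivation contributes only strictly lower-order terms, by exploiting that $\phi(h)-h$ lies in the augmentation ideal of $\F_{p}\llb H\rrb$, is the real content of the inductive step; once it is in place, the rest of the argument is a formal consequence of the structural decomposition of $G$.
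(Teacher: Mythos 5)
Your proposal is correct in outline, but it reaches the theorem by a genuinely different route from the paper. The paper does not approximate $\F_p \llb G \rrb$ by the Iwasawa algebras of the quotients directly; it uses the residual decomposition only to establish the Atiyah conjecture for $G$ (integrality of the rank gradient $\beta_0$, via the Bergeron--Linnell--L\"uck--Sauer theorem for torsion-free $p$-adic analytic groups together with a chain/limit argument), and then deduces the zero-divisor statement module-theoretically: for a non-unit $b$, the cyclic module $C = \F_p \llb G \rrb b$ gives a one-relator quotient $M = \F_p \llb G \rrb / C$, integrality forces $\beta_0^G(M) = 0$, and the Schreier-formula characterization of free modules then shows $C$ is free, i.e.\ the annihilator of $b$ is trivial. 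Your route instead pushes the domain property itself through the inverse limit: the quotients are indeed iterated semidirect products of copies of $\Z_p$ (the splittings you implicitly need exist because $\Z_p$ is projective, so the torsion-free poly-procyclic quotients produced in Section 5 split at each stage), the kernels of $\pi_n \colon \F_p \llb G \rrb \to \F_p \llb G_n \rrb$ form a descending chain with trivial intersection because the normal subgroups $\Omega_n$ constructed there form a trivially intersecting chain, and so two nonzero elements survive in a common quotient --- this limiting step is sound. What your approach buys is independence from the Atiyah/rank-gradient machinery; what it costs is that you must actually prove that $\F_p \llb H \rtimes \Z_p \rrb$ is a domain, and your hint that $\phi(h) - h$ lies in the augmentation ideal is not by itself enough to make the $\sigma$-derivation strictly lower order for the obvious filtration --- you need the unipotent-mod-$p$ structure of the conjugation action (or Venjakob's skew power series theory, or simply the known theorem of A.~Neumann, cf.\ Ardakov--Brown, that completed group algebras of torsion-free compact $p$-adic analytic groups have no zero-divisors, which covers all the quotients at once and could be quoted in place of your induction). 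By contrast, the paper obtains the theorem with no extra work beyond machinery it needs anyway for the Hanna Neumann argument.
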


\section{Preliminaries}

\subsection{Homology}

We fix once and for all a prime number $p$.
For a finitely generated pro-$p$ group $G$, the completed group algebra of $G$ over $\F_p$ is
$$\F_p \llb G \rrb = \varprojlim_{N \lhd_o G} \F_p[G/N].$$
We consider the category of (left profinite) $\F_p \llb G \rrb$-modules.
Let $M$ be such a module, and note that $M$ is finitely generated if and only if its maximal $G$-trivial quotient $M_G$
(which can also be identified with the homology group $H_0(G,M)$) is of finite dimension over $\F_p$.
We say that $M$ is finitely related if
\begin{equation}
\dim_{\F_p} H_1(G, M) < \infty.
\end{equation}
If $M$ is also finitely generated, we say that $M$ is finitely presented.
Equivalently, $M$ fits into an exact sequence of $\F_p \llb G \rrb$-modules
\begin{equation}
0 \to K \to \F_p \llb G \rrb^d \to M \to 0
\end{equation}
where $d \in \mathbb{N}$ and $K$ is finitely generated.
For example, the (trivial) one-dimensional $G$-module satisfies
\begin{equation}
\dim_{\F_p} H_1(G, \F_p) = d(G) < \infty.
\end{equation}

We will make free use of the homological long exact sequence associated to a short exact sequence of $\F_p \llb G \rrb$-modules by \cite[Proposition 6.1.9]{RZ}. An example is the following.
\begin{prop} \label{HFPprop}

Suppose that $G$ is finitely presented, let $M$ be a finitely presented $\F_p \llb G \rrb$-module, 
and let $M_0$ be an open $G$-submodule of $M$.
Then $M_0$ is also finitely presented. 

\end{prop}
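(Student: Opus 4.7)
The plan is to control $H_0(G,M_0)$ and $H_1(G,M_0)$ via the long exact sequence in homology, after reducing $M/M_0$ to the trivial $G$-module by a composition-series argument.

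Since $M_0$ is open in the profinite module $M$, the quotient $M/M_0$ is finite. Because $G$ is pro-$p$, every non-zero finite $\F_p\llb G\rrb$-module has a non-zero $G$-fixed vector, and hence $M/M_0$ admits a composition series whose successive factors are all isomorphic to the trivial module $\F_p$. I would induct on the length $n$ of such a series: given $M_0 \subsetneq M$ open with $M/M_0$ of length $n \geq 2$, I would pick an intermediate open submodule $M_1$ (the preimage of a one-dimensional $G$-fixed subspace of $M/M_0$) with $M_1/M_0 \cong \F_p$; the induction hypothesis applied to $M_1 \subseteq M$ yields that $M_1$ is finitely presented, and the base case $n=1$ applied to $M_0 \subseteq M_1$ then finishes.

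For the base case, write down the long exact sequence in homology associated to $0 \to M_0 \to M \to \F_p \to 0$:
\begin{equation*}
H_2(G,\F_p) \to H_1(G,M_0) \to H_1(G,M) \to H_1(G,\F_p) \to H_0(G,M_0) \to H_0(G,M) \to \F_p \to 0.
\end{equation*}
By hypothesis, $H_0(G,M)$ and $H_1(G,M)$ are finite-dimensional over $\F_p$. The finite presentation of $G$ ensures that $\dim H_1(G,\F_p) = d(G)$ and $\dim H_2(G,\F_p) = r(G)$ are both finite (the latter being the minimal number of relators, equal to the dimension of the Pontryagin dual $H^2(G,\F_p)$). Exactness then forces $H_0(G,M_0)$ and $H_1(G,M_0)$ to be finite-dimensional, so $M_0$ is finitely presented.

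The argument is essentially routine; there is no serious obstacle beyond the standard identification of finite presentation of a pro-$p$ group $G$ with the finiteness of $\dim H_2(G,\F_p)$, together with the observation that a finite $\F_p\llb G\rrb$-module over a pro-$p$ group has only trivial composition factors.
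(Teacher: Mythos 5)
Your proof is correct and follows essentially the same route as the paper: both reduce to the codimension-one case $0 \to M_0 \to M \to \F_p \to 0$ using that $\F_p$ is the only simple $\F_p\llb G\rrb$-module, and then extract finiteness of $H_0(G,M_0)$ and $H_1(G,M_0)$ from the associated long exact sequence, using finite generation of $G$ for $H_1(G,\F_p)$ and finite presentation for $H_2(G,\F_p)$. The only cosmetic difference is that you spell out the composition-series induction that the paper compresses into the phrase ``by an inductive argument.''
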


\begin{proof}

As the only simple $G$-module is $\F_p$, we may assume (by an inductive argument) that the inclusion of $M_0$ in $M$ is encoded in the exact sequence
\begin{equation}
0 \to M_0 \to M \to \F_p \to 0.
\end{equation}
The associated long exact sequence provides us with the inequality
\begin{equation}
\dim_{\F_p} H_0(G, M_0) \leq \dim_{\F_p} H_1(G, \F_p) + \dim_{\F_p} H_0(G, M)  
\end{equation}
where the right hand side is finite since $G$ is a finitely generated pro-$p$ group and $M$ is a finitely generated $G$-module.
We conclude that $M_0$ is a finitely generated $G$-module.
The aforementioned long exact sequence also gives
\begin{equation}
\dim_{\F_p} H_1(G, M_0) \leq \dim_{\F_p} H_2(G, \F_p) + \dim_{\F_p} H_1(G, M)  
\end{equation}
where now the right hand side is finite since $G$ is finitely presented and $M$ is finitely related.
It follows that $M_0$ is finitely presented.
\end{proof}

If $H$ is a (closed) subgroup of $G$ and $M$ is an $\F_p \llb H \rrb$-module,
we can induce $M$ from $H$ to $G$ by
\begin{equation}
\mathrm{Ind}_H^G M \defeq  \F_p \llb G \rrb \ \widehat\otimes_{\F_p \llb H \rrb} \ M
\end{equation} 
obtaining an $\F_p \llb G \rrb$-module.
Induction is an exact functor, (naturally) satisfying the transitivity formula
\begin{equation}
\mathrm{Ind}_K^G \mathrm{Ind}_H^K M \cong_G \mathrm{Ind}_H^G M
\end{equation}
for every subgroup $K$ of $G$ that contains $H$.
We often use Shapiro's lemma (see \cite[Theorem 6.10.8 (d)]{RZ}) saying that we have (natural) isomorphisms
\begin{equation}
H_*(G, \mathrm{Ind}_H^G M) \cong H_*(H,M).
\end{equation}

Furthermore, if $M$ is a $G$-module, we have the (natural) isomorphism 
\begin{equation} \label{SomeIndEq}
\mathrm{Ind}_H^G M \cong M \ \widehat\otimes_{\F_p} \ \F_p \llb G/H \rrb
\end{equation}
of $G$-modules. 
If moreover $H$ is open in $G$ then the $G$-module $\F_p[G/H]$ admits a filtration of length $[G : H]$ with one-dimensional ($\cong \F_p$) consecutive quotients.
As a result, the induced module
\begin{equation} 
M \ \widehat\otimes_{\F_p} \ \F_p[G/H]
\end{equation}
from equation \eqref{SomeIndEq} admits a filtration (by $G$-submodules) of length $[G : H]$ with consecutive quotients isomorphic to
\begin{equation}
M \ \widehat\otimes_{\F_p} \ \F_p \cong M.
\end{equation}

Let us examine another example of a (possibly) finitely related module.
For that pick subgroups $U,W$ of $G$, and consider $\F_p \llb G/W \rrb$ as a $U$-module.
Using Melnikov's direct sum (over a profinite set) and Mackey's formula,
one can write an isomorphism of $U$-modules
\begin{equation} \label{UDecIs}
\F_p \llb G/W \rrb  \cong \bigoplus_{g \in U \backslash G/W} \F_p \llb U/ U \cap gWg^{-1} \rrb.
\end{equation}
It is shown in \cite[Lemma 3.3]{Mel} that homology commutes with (profinite) direct sums, so we have
\begin{equation} \label{Eq0}
H_* \big( U, \F_p \llb G/W \rrb \big) \cong 
\bigoplus_{g \in U \backslash G/W}  H_* \big(U, \F_p \llb U/ U \cap gWg^{-1} \rrb \big).
\end{equation}
Applying Shapiro's lemma to the right hand side gives
\begin{equation} \label{Eq2}
H_* \big( U, \F_p \llb G/W \rrb \big) \cong  \bigoplus_{g \in U \backslash G/W}  H_* \big(U \cap gWg^{-1}, \F_p \big)
\end{equation}
so for the first homology we get that
\begin{equation} \label{Eq3}
\dim_{\F_p} H_1 \big( U, \F_p \llb G/W \rrb \big) = \sum_{g \in U \backslash G/W} d(U \cap gWg^{-1}).
\end{equation}

In particular, $\F_p \llb G/W \rrb$ is a finitely related $U$-module if (and only if) there are only finitely many 
$g \in U \backslash G/W$ for which the intersection $U \cap gWg^{-1}$ is nontrivial, and each intersection is a finitely generated pro-$p$ group.

\subsection{Demushkin groups}

Let $G$ be a nonsolvable pro-$p$ Demushkin group.
As mentioned earlier, $G$ is a one-relator group, or more succinctly
\begin{equation} \label{FirstHomEq}
\dim_{\mathbb{F}_p} H_2(G,\mathbb{F}_p) = 1.
\end{equation}

\begin{cor} \label{H2BCor}

For a finite $\F_p \llb G \rrb$-module $L$ we have 
\begin{equation}
\dim_{\F_p} H_2(G,L) \leq \dim_{\F_p} L.
\end{equation}

\end{cor}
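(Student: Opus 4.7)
The plan is to prove the bound by induction on $\dim_{\F_p} L$, using the one-relator property $\dim_{\F_p} H_2(G,\F_p) = 1$ as the base case.

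First I would handle the base case $\dim_{\F_p} L = 1$. Since the only simple module over $\F_p \llb G \rrb$ for a pro-$p$ group $G$ is the trivial module $\F_p$, any one-dimensional finite $\F_p \llb G \rrb$-module is isomorphic to $\F_p$, and the bound $\dim_{\F_p} H_2(G,\F_p) = 1 \leq 1$ is exactly \eqref{FirstHomEq}.

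For the inductive step, suppose $\dim_{\F_p} L \geq 2$. Again using that the only simple $\F_p \llb G \rrb$-module is $\F_p$, I can pick a maximal proper $G$-submodule $L_0 \subset L$, obtaining a short exact sequence
\begin{equation*}
0 \to L_0 \to L \to \F_p \to 0.
\end{equation*}
The induced long exact sequence in homology contains the segment
\begin{equation*}
H_2(G, L_0) \to H_2(G, L) \to H_2(G, \F_p),
\end{equation*}
which, combined with \eqref{FirstHomEq}, yields
\begin{equation*}
\dim_{\F_p} H_2(G, L) \leq \dim_{\F_p} H_2(G, L_0) + 1.
\end{equation*}
By the inductive hypothesis applied to $L_0$ (which is a finite $\F_p \llb G \rrb$-module of dimension $\dim_{\F_p} L - 1$), the right hand side is at most $\dim_{\F_p} L_0 + 1 = \dim_{\F_p} L$, giving the desired inequality.

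There is essentially no obstacle here; the proof is a routine dévissage reducing to the one-relator property \eqref{FirstHomEq}. The only mild subtlety worth mentioning is the justification that a maximal proper $G$-submodule exists with cyclic (hence trivial one-dimensional) quotient, which is standard since $L$ is finite and $G$ is a pro-$p$ group, so all composition factors of $L$ are isomorphic to $\F_p$.
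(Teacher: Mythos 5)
Your proof is correct and follows essentially the same route as the paper: dévissage via a codimension-one submodule $0 \to L_0 \to L \to \F_p \to 0$, the long exact sequence segment $H_2(G,L_0) \to H_2(G,L) \to H_2(G,\F_p)$, and induction using $\dim_{\F_p} H_2(G,\F_p)=1$.
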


\begin{proof}
By picking a $G$-submodule of codimension $1$ we get the exact sequence
\begin{equation}
0 \to L_0 \to L \to \F_p \to 0.
\end{equation}
The associated long exact sequence tells us that
\begin{equation}
\dim_{\F_p} H_2(G,L) \leq \dim_{\F_p} H_2(G,L_0) + \dim_{\F_p} H_2(G,\F_p)
\end{equation}
so our bound follows by induction using equation \eqref{FirstHomEq}.
\end{proof}

The cohomological dimension of $G$ is $2$, so we have the following.

\begin{cor} \label{MonH2Cor}

For any $G$-submodule $L$ of an $\F_p \llb G \rrb$-module $M$ we have
\begin{equation}
\dim_{\F_p} H_2(G, L) \leq \dim_{\F_p} H_2(G, M).
\end{equation}

\end{cor}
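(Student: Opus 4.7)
The plan is simply to mimic the strategy used in the proof of \corref{H2BCor} but one degree higher: form a short exact sequence involving $L$ and $M$, pass to the long exact sequence in homology, and exploit the vanishing of a boundary term coming from the cohomological dimension hypothesis.

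Concretely, since $L$ is (closed as) a $G$-submodule of $M$, we have the short exact sequence of $\F_p \llb G \rrb$-modules
\begin{equation}
0 \to L \to M \to M/L \to 0.
\end{equation}
The associated long exact sequence in homology contains the segment
\begin{equation}
H_3(G, M/L) \to H_2(G, L) \to H_2(G, M).
\end{equation}
Because the cohomological dimension of $G$ equals $2$, the homology of $G$ with coefficients in any profinite $\F_p \llb G \rrb$-module vanishes in degrees $\geq 3$; in particular $H_3(G, M/L) = 0$. Hence the map $H_2(G, L) \to H_2(G, M)$ is injective, which immediately gives the desired dimension inequality (interpreting both sides as extended non-negative integers if either is infinite).

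There is no real obstacle here: the only point worth checking is that one is legitimately working in a category where the long exact sequence in continuous (profinite) homology is available for the quotient $M/L$, which is granted by the hypothesis that we deal with $\F_p \llb G \rrb$-modules and by \cite[Proposition 6.1.9]{RZ} as already invoked in the preliminaries. The translation of ``cohomological dimension $2$'' into vanishing of $H_3$ on profinite $\F_p \llb G \rrb$-modules is the standard one for pro-$p$ groups.
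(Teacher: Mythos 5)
Your proof is correct and follows exactly the paper's argument: the short exact sequence $0 \to L \to M \to M/L \to 0$, the segment $H_3(G,M/L) \to H_2(G,L) \to H_2(G,M)$ of the long exact sequence, and the vanishing of $H_3(G,M/L)$ from $\mathrm{cd}(G)=2$. Nothing further is needed.
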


\begin{proof}

Consider the short exact sequence of $\F_p \llb G \rrb$-modules
\begin{equation}
0 \to L \to M \to M/L \to 0.
\end{equation}
A part of the associated long exact sequence is
\begin{equation}
H_3(G, M/L) \to H_2(G, L) \to H_2(G, M)
\end{equation}
whose first term vanishes as $G$ is of cohomological dimension $2$.
\end{proof}

Any open subgroup $G_0$ of $G$ is also a nonsolvable Demushkin group, and its number of generators is given by the formula
\begin{equation} \label{RankFor}
d\big(G_0\big) - 2 = \big(d(G) - 2\big)[G : G_0]
\end{equation}
that appears in \cite[Exercise 4.5.6]{Ser}.
By \cite[Exercise 4.5.5]{Ser} or \cite[Theorem 2 (ii)]{Lab0}, any infinite index subgroup of $G$ is free pro-$p$.

%
%
%
%
%

\section{Finiteness of the set $S$}

The purpose of this section is to show that the set $S$ from \thmref{FirstRes} is finite, 
and to deduce that $\F_p \llb G/U \times G/W \rrb$ is a finitely related $G$-module.

\begin{lem} \label{BettiLem}

Let $G$ be a pro-$p$ Demushkin group with $d(G) \geq 3$, let $A$ be a subgroup of $G$ with 
\begin{equation}
d(A) + 1 < d(G),
\end{equation} and let $T$ be an infinite subset of $G$.
Then there exists a subgroup $B$ of infinite index in $G$ that contains both $A$ and infinitely many elements of $T$.

\end{lem}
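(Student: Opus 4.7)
The plan is to exploit the rank formula \eqref{RankFor}: since $G$ is a nonsolvable Demushkin group with $d(G) \geq 3$, any open proper subgroup of $G$ has rank at least $2d(G)-2 > d(G)$. Consequently any closed subgroup of $G$ of rank strictly less than $d(G)$ is neither equal to $G$ (by the Burnside basis theorem) nor open, so it must be of infinite index.

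The first step is a pigeonhole on the left coset map $T \to A \backslash G$: if some coset $At_0$ captures infinitely many $t \in T$, then $B \defeq \overline{\langle A, t_0\rangle}$ has rank at most $d(A)+1 < d(G)$, hence is of infinite index by the observation above, and contains $A$ along with every $t = at_0 \in At_0 \cap T$. This handles the easy side of the dichotomy.

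The main obstacle is the complementary case, in which each $A$-coset meets $T$ in only finitely many elements. Here I would refine the pigeonhole to the finite quotient $G/(A\Phi(G))$, where $\Phi(G)$ is the Frattini subgroup of $G$, producing an infinite subset of $T$ inside a single coset $A\Phi(G) t^*$; writing each such $t$ as $a_t \phi_t t^*$ with $a_t \in A$ and $\phi_t \in \Phi(G)$, the problem reduces to locating infinitely many of the residues $\phi_t$ inside a subgroup of $G$ of rank at most $d(G)-1$ containing $A$ and $t^*$. I would iterate this pigeonhole along the descending Frattini series $G \supseteq \Phi(G) \supseteq \Phi^2(G) \supseteq \cdots$ and use a diagonal/compactness argument in the profinite topology of $G$ to extract from $T$ a Cauchy subsequence $t_i \to t^*$ whose consecutive residues lie arbitrarily deep in the filtration; the candidate $B$ would then be the closed subgroup $\overline{\langle A, t^*, \{\phi_i\}\rangle}$. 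The hardest part, and the crux of the proof, will be verifying that this $B$ indeed has rank below $d(G)$: a priori the residues, although topologically small, could span arbitrarily many independent directions in $G$, so striking the right balance presumably requires exploiting the one-relator pro-$p$ structure of $G$ and the characterisation of infinite-index subgroups of $G$ as free pro-$p$.
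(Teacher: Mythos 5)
Your first step is fine and is essentially the ingredient the paper also uses inside each stage of its induction: by \eqref{RankFor}, a proper open subgroup of $G$ has rank at least $2d(G)-2>d(G)$, so a closed subgroup generated by $A$ and one extra element is proper and non-open, hence of infinite index; thus if a single coset $At_0$ met $T$ in an infinite set you would be done. But the complementary case is where the whole content of the lemma lies, and there your proposal has a genuine gap: you yourself flag that you cannot verify the crux, namely that the subgroup $B=\overline{\langle A,t^*,\{\phi_i\}\rangle}$ produced by your Frattini-series/compactness extraction has rank below $d(G)$. Worse, that target is the wrong one: $B$ is required to contain infinitely many elements of $T$, and such a subgroup may well have arbitrarily large (even infinite) rank, so no argument can in general force $d(B)<d(G)$. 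The conclusion of the lemma only demands infinite index, and infinite-index subgroups of a Demushkin group are free pro-$p$ of possibly infinite rank; conflating ``small rank'' with ``infinite index'' is exactly the point where the sketch breaks down, and appealing to the one-relator structure or to freeness of infinite-index subgroups does not repair it.

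The paper avoids this by never trying to bound the rank of $B$ itself. It builds, by induction, a strictly descending chain of open subgroups $G_n$ and an ascending chain $A_n$ with $A_n\subseteq G_n$, $d(A_n)+1<d(G_n)$, $T\cap G_n$ infinite, and $|A_n\cap T|\geq n$. The step uses your covering observation relative to $G_n$: since $\langle A_n,g\rangle$ is proper in $G_n$ for every $g\in G_n$, the finitely many index-$p$ subgroups of $G_n$ containing $A_n$ cover $G_n$, so one of them, taken as $G_{n+1}$, contains infinitely many elements of $T$; then $A_{n+1}=\langle A_n,t\rangle$ for a new $t\in T\cap G_{n+1}$. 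The formula \eqref{RankFor} gives $d(G_{n+1})\geq 2d(G_n)-2\geq d(G_n)+1$, while $d(A_{n+1})\leq d(A_n)+1$, so the hypothesis $d(A_{n+1})+1<d(G_{n+1})$ survives and the induction never stops. Finally $B=\overline{\bigcup_n A_n}$ contains infinitely many elements of $T$ and lies in every $G_n$, hence has infinite index because the $G_n$ are strictly descending — no control on $d(B)$ is needed. Your outline is missing precisely this mechanism (trap $B$ in a descending chain of open subgroups whose ranks grow faster than those of the $A_n$), and without it the hard case does not go through.
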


\begin{proof}

We inductively construct a strictly descending sequence of subgroups $G_n \leq_o G$, 
and an ascending sequence of subgroups $A_n \leq_c G$ such that:
\begin{enumerate}
\item The inclusion $A_n \subseteq G_n$ and the inequality $d(A_n) + 1 < d(G_n)$ hold.
\item The set $T_n \defeq T \cap G_n$ is infinite.
\item The subgroup $A_n$ contains (at least) $n$ distinct elements from $T$.
\end{enumerate}
Set $A_0 \defeq A, \ G_0 \defeq G$, and suppose that we have completed our construction up to some $n \in \mathbb{N}$ inclusive.
We claim that the index $p$ subgroups of $G_n$ that contain $A_n$ cover $G_n$.
Indeed, let $g \in G_n$ and set 
\begin{equation}
H \defeq \langle A_n \cup \{g\} \rangle.
\end{equation}
By (1) above, we have
\begin{equation}
d(H) \leq d(A_n) + 1 < d(G_n)
\end{equation}
so $H$ is a proper subgroup of $G_n$, and is thus contained in a subgroup of index $p$ in $G_n$. Hence, our claim is verified.

As $G_n$ is an open subgroup of $G$, it has only finitely many subgroups of index $p$ containing $A_n$.
Since these subgroups cover $G_n$, it follows from (2) above that one such subgroup, which we take as our $G_{n+1}$, contains infinitely many elements from $T_n$. 
Using (3) above, we can find a subset
\begin{equation} \label{DefReQ}
R \subseteq A_n \cap T  = A_n \cap T_{n+1}
\end{equation}
such that $|R| = n$. As $T_{n+1}$ is infinite, we can pick a $t \in T_{n+1} \setminus R$, and put
\begin{equation}
A_{n+1} \defeq \langle A_n \cup \{t\} \rangle.
\end{equation}
Recalling equation \eqref{DefReQ} we get that
\begin{equation}
|A_{n+1} \cap T| \geq |A_{n+1} \cap T_{n+1}| \geq |R \cup \{t\}| = n+1.
\end{equation}
Furthermore, from equation \eqref{RankFor} we get that
\begin{equation}
\begin{split}
d \big(G_{n+1}\big) &= \big(d(G_n) - 2\big)p + 2 \geq 2d\big(G_n\big) - 2 \geq d\big(G_n\big) + 1 \\
&> d\big(A_n\big) + 2 \geq d\big(A_{n+1}\big) + 1
\end{split}
\end{equation}
so we have completed our induction.

To conclude, set 
\begin{equation}
B \defeq \overline{\bigcup_{n \in \mathbb{N}} A_n}
\end{equation}
and observe that $B \leq_c G_n$ for each $n \in \mathbb{N}$, so 
\begin{equation}
[G : B] \geq \sup_{n \in \mathbb{N}} \ [G : G_n] = \infty
\end{equation}
as the sequence $G_n$ is strictly descending. At last, by $(1) - (3)$ we have
\begin{equation}
\forall \ n \in \mathbb{N} \quad |B \cap T| \geq |A_n \cap T_n| \geq  n
\end{equation}
so $B$ contains infinitely many elements from $T$, as required.
\end{proof}

In the proof of \cite[Lemma 4.2]{J} it is shown that for any two finitely generated subgroups $U,W$ of a finitely generated free pro-$p$ group $F$, one has
\begin{equation} \label{SfreeFinEq}
\big| \{g \in U \backslash F/ W \ | \ U \cap gWg^{-1} \neq 1\} \big| < \infty.
\end{equation}
In the proof of the following corollary, we shall apply this to a free pro-$p$ group $F$ of countable rank.
This is justified by the embeddability of a free pro-$p$ group of countable rank into a finitely generated free pro-$p$ group.

\begin{cor} \label{PlusOneCor}

Let $G$ be a nonsolvable Demushkin group, and let $U,W$ be subgroups of $G$ such that
\begin{equation}
d(U) + d(W) + 1 < d(G).
\end{equation}
Then the set
\begin{equation}
S \defeq \{g \in U \backslash G/ W \ | \ U \cap gWg^{-1} \neq 1 \}
\end{equation}
is finite.

\end{cor}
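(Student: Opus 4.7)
The plan is a proof by contradiction, reducing to the free pro-$p$ case recorded in \eqref{SfreeFinEq}. First I would dispose of the trivial case where $U$ or $W$ is the trivial group (then $S$ is empty or a singleton), so I may assume $d(U), d(W) \geq 1$. The hypothesis $d(U) + d(W) + 1 < d(G)$ then forces $d(G) \geq 4$, which in particular yields the rank condition $d(G) \geq 3$ required by \lemref{BettiLem}.

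Suppose toward contradiction that $S$ is infinite, and pick distinct double coset representatives $g_1, g_2, \ldots \in G$ with $U \cap g_i W g_i^{-1} \neq 1$. Set $T \defeq \{g_i : i \in \mathbb{N}\}$ and $A \defeq \langle U \cup W \rangle$, a finitely generated closed subgroup of $G$ satisfying
\begin{equation*}
d(A) + 1 \leq d(U) + d(W) + 1 < d(G).
\end{equation*}
Applying \lemref{BettiLem} to $A$ and $T$ then produces a subgroup $B$ of infinite index in $G$ that contains $U$, $W$, and infinitely many of the $g_i$'s. Since $G$ is a nonsolvable Demushkin group and $[G:B] = \infty$, the subgroup $B$ is free pro-$p$ by the result of Labute and Serre cited at the end of Section~2.2; being a closed subgroup of the finitely generated pro-$p$ group $G$, it has at most countable topological rank.

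Next I would use the noted embeddability of a free pro-$p$ group of countable rank into a finitely generated free pro-$p$ group to embed $B \hookrightarrow F$ with $F$ finitely generated free pro-$p$. Inside $F$, the groups $U$ and $W$ are still finitely generated, and infinitely many $g_i$'s lie in $F$ with $U \cap g_i W g_i^{-1} \neq 1$. Moreover, if two such $g_i, g_j$ belonged to the same $U$-$W$ double coset of $F$, then the relation $g_i = u g_j w$ with $u \in U, w \in W$ would also hold in $G$, contradicting our choice of distinct representatives in $U \backslash G / W$. Hence one would obtain infinitely many $U$-$W$ double cosets of $F$ with nontrivial intersection, contradicting \eqref{SfreeFinEq}.

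The main obstacle is selecting $A$ so that \lemref{BettiLem} yields a $B$ simultaneously containing both $U$ and $W$ while still being of infinite index (hence free pro-$p$): the corollary's hypothesis $d(U) + d(W) + 1 < d(G)$ is calibrated precisely to make $A = \langle U \cup W \rangle$ satisfy the rank condition $d(A) + 1 < d(G)$ of the lemma, and without the extra ``$+1$'' this reduction would break. The remaining ingredients---free pro-$p$-ness of infinite-index subgroups of $G$ and embeddability of countable-rank free pro-$p$ groups into finitely generated ones---are standard facts already set up in the preliminaries.
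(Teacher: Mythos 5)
Your proposal is correct and takes essentially the same route as the paper: a contradiction argument applying \lemref{BettiLem} to $A = \langle U \cup W \rangle$ and a set of representatives of $S$, producing an infinite-index (hence free pro-$p$) subgroup $B$ containing $U$, $W$ and infinitely many representatives, and then invoking \eqref{SfreeFinEq} via the countable-rank embedding noted before the corollary. The extra details you spell out (the trivial-subgroup case ensuring $d(G) \geq 3$, the explicit embedding $B \hookrightarrow F$, and the distinctness of the resulting double cosets) are precisely the points the paper leaves implicit, so nothing is missing.
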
 

\begin{proof}

Set $A \defeq \langle U \cup W \rangle$, note that
\begin{equation}
d(A) + 1 \leq d(U) + d(W) + 1< d(G),
\end{equation}
and suppose toward a contradiction that $S$ is infinite.
By an abuse of notation, we shall identify $S$ with a section (some set of representatives) of it in $G$.
It follows from \lemref{BettiLem} that there exists a subgroup $B$ of infinite index in $G$ such that
\begin{equation}
U,W \subseteq B, \quad |B \cap S| = \infty.
\end{equation}
It follows immediately that the set
\begin{equation}
\{ g \in U \backslash B/ W \ | \ U \cap gWg^{-1} \neq 1 \}
\end{equation}
is also infinite, contrary to equation \eqref{SfreeFinEq}, as $B$ is a free pro-$p$ group.
\end{proof}

In order to reduce the general case to that of \corref{PlusOneCor}, we need the following claim.

\begin{prop} \label{RedProp}
Let $G$ be a Demushkin group with $d(G) \geq 3$, and let $U,W$ be finitely generated subgroups of infinite index in $G$.
Then there exists an open normal subgroup $N$ of $G$ such that
\begin{equation} \label{NaimEq}
d(U \cap N) + d(W \cap N) + 1 < d(N).
\end{equation}

\end{prop}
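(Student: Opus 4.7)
The plan is to exploit two consequences of the hypothesis $[G:U] = [G:W] = \infty$: first, by the results of Serre and Labute cited in Section 2.2, both $U$ and $W$ are free pro-$p$ of ranks $d(U)$ and $d(W)$ respectively; second, one can arrange for $UN$ and $WN$ to have arbitrarily large index in $G$ by choosing the open normal subgroup $N$ appropriately. Combined with the Demushkin rank formula \eqref{RankFor} and the Nielsen--Schreier formula for free pro-$p$ groups, this will reduce \eqref{NaimEq} to a direct numerical inequality.

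More precisely, for any $N \lhd_o G$ the second isomorphism theorem gives $[U : U \cap N] = [UN:N] = [G:N]/[G:UN]$, so the Nielsen--Schreier formula applied to the free pro-$p$ group $U$ yields
$$d(U \cap N) = (d(U)-1)\frac{[G:N]}{[G:UN]} + 1,$$
and symmetrically for $W$. Substituting these together with $d(N) = (d(G)-2)[G:N] + 2$ from \eqref{RankFor} into \eqref{NaimEq} and dividing through by $[G:N]$ reduces the goal to
$$\frac{d(U)-1}{[G:UN]} + \frac{d(W)-1}{[G:WN]} + \frac{1}{[G:N]} < d(G)-2,$$
whose right-hand side is at least $1$ by the hypothesis $d(G) \geq 3$.

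It therefore suffices to produce $N \lhd_o G$ so that $[G:UN]$, $[G:WN]$, and $[G:N]$ all exceed some constant $k$ depending only on $d(U)$ and $d(W)$ (for instance, $k = d(U)+d(W)+1$ will do). Since $G$ has only finitely many open subgroups of any fixed index and $U$ is closed of infinite index, $U$ cannot equal the intersection of finitely many open subgroups of bounded index; hence there exists an open subgroup $V_U \supseteq U$ with $[G:V_U] \geq k$, and similarly an open $V_W \supseteq W$ with $[G:V_W] \geq k$. Taking $N$ to be any open normal subgroup of $G$ contained in $V_U \cap V_W$ (e.g.\ the normal core of that intersection) forces $UN \subseteq V_U$ and $WN \subseteq V_W$, which gives $[G:UN], [G:WN] \geq k$ and a fortiori $[G:N] \geq k$. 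The only real subtlety is the simultaneous index control of $[G:UN]$ and $[G:WN]$, which this intersection trick resolves; once it is in place the rest is bookkeeping.
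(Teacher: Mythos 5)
Your proposal is correct and follows essentially the same route as the paper: both apply the Schreier index formula to the free pro-$p$ groups $U$ and $W$ together with the Demushkin rank formula \eqref{RankFor}, and then choose an open normal $N$ (via open subgroups of large index containing $U$ and $W$, respectively their normal core) so that $[G:UN]$ and $[G:WN]$ are large enough to make the resulting numerical inequality hold. The only difference is bookkeeping (your uniform bound $k=d(U)+d(W)+1$ versus the paper's $[G:UN]\geq 2d(U)$, $[G:WN]\geq 2d(W)$) and that you spell out the existence of such an $N$, which the paper leaves implicit.
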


\begin{proof}

As $U,W$ are of infinite index in $G$, we can choose an open normal subgroup $N$ of $G$ such that
\begin{equation} \label{NchoiceEq}
[G : UN] \geq 2d(U), \quad [G : WN] \geq 2d(W).
\end{equation}
Since $U$ is free pro-$p$, Schreier's formula gives
\begin{equation}
\begin{split}
d\big(U \cap N\big)  &= \big(d(U)-1\big)[U : U \cap N] + 1 = \big(d(U)-1\big)[UN : N] + 1 \\
&= \frac{\big(d(U)-1\big)[G : N]}{[G : UN]} + 1 \leq \frac{d(U)[G : N]}{[G : UN]}.
\end{split}
\end{equation}
Arguing similarly for $W$ (instead of $U$), and combining the bounds, we infer that the left hand side of equation \eqref{NaimEq} does not exceed
\begin{equation}
\frac{d(U)[G : N]}{[G : UN]} + \frac{d(W)[G : N]}{[G : WN]} + 1.
\end{equation}
Taking into account \eqref{NchoiceEq} and \eqref{RankFor}, we arrive at the desired inequality.
\end{proof}

\begin{cor} \label{FinSCor}

Let $G$ be a nonsolvable Demushkin group, and let $U,W$ be finitely generated subgroups of infinite index in $G$.
Then the set $S$ from \thmref{FirstRes} and \corref{PlusOneCor} is finite.

\end{cor}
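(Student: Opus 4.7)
The plan is to reduce directly to \corref{PlusOneCor} via \propref{RedProp}. First I invoke \propref{RedProp} to obtain an open normal subgroup $N \lhd G$ satisfying $d(U \cap N) + d(W \cap N) + 1 < d(N)$. Since $N$ is open in the nonsolvable Demushkin group $G$, it is itself a nonsolvable Demushkin group, so \corref{PlusOneCor} is applicable inside $N$.

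Next, I fix a set of coset representatives $g_1, \dots, g_r$ for $G/N$ and set $U_i \defeq g_i^{-1} U g_i$. Normality of $N$ gives $U_i \cap N = g_i^{-1}(U \cap N) g_i$, so in particular $d(U_i \cap N) = d(U \cap N)$ for every $i$. Thus \corref{PlusOneCor}, applied inside $N$ to each pair $(U_i \cap N,\, W \cap N)$, yields that each set
\begin{equation*}
S_i^N \defeq \{[n] \in (U_i \cap N) \backslash N / (W \cap N) \ | \ (U_i \cap N) \cap n(W \cap N)n^{-1} \neq 1\}
\end{equation*}
is finite. It then suffices to build a surjection $\bigsqcup_i S_i^N \twoheadrightarrow S$ sending $(i,[n]) \mapsto [Ug_i n W]$. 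Well-definedness is routine: if $n' = xny$ with $x \in U_i \cap N$ and $y \in W \cap N$, then $g_i n' = (g_i x g_i^{-1})\, g_i n y \in U g_i n W$. That the image lies in $S$ follows from the inclusion $(U_i \cap N) \cap n(W \cap N)n^{-1} \subseteq U_i \cap nWn^{-1}$, which, conjugated by $g_i$, sits inside $U \cap g_i n W n^{-1} g_i^{-1}$.

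The step demanding a genuine idea will be surjectivity. Given any $[UgW] \in S$, I write $g = g_i n$ with $n \in N$ and need to show that $(U_i \cap N) \cap n(W \cap N)n^{-1} \neq 1$. Here I use the fact (recorded at the end of Section 2) that any infinite-index closed subgroup of a nonsolvable Demushkin group is free pro-$p$, so $U$ is free pro-$p$ and hence torsion-free. Its nontrivial closed subgroup $U \cap gWg^{-1}$ is then torsion-free and infinite, and so its open subgroup $U \cap gWg^{-1} \cap N$ is still nontrivial. Using normality of $N$ to commute intersection with conjugation, this open subgroup equals
\begin{equation*}
g_i \big[(U_i \cap N) \cap n(W \cap N)n^{-1}\big] g_i^{-1},
\end{equation*}
which therefore cannot be trivial. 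This gives a preimage of $[UgW]$ in $S_i^N$ and proves that $|S| \leq \sum_i |S_i^N| < \infty$.
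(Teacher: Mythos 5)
Your argument is correct and follows essentially the same route as the paper: take $N$ from \propref{RedProp}, apply \corref{PlusOneCor} inside $N$ over the finitely many cosets of $N$, and use torsion-freeness together with openness of $N$ to see that a nontrivial intersection $U \cap gWg^{-1}$ stays nontrivial after intersecting with $N$. The only difference is bookkeeping: the paper conjugates $W$ by coset representatives and tracks double-coset representatives, while you conjugate $U$ and package the count as a surjection $\bigsqcup_i S_i^N \twoheadrightarrow S$.
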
 

\begin{proof}

Take $N$ to be an open normal subgroup of $G$ as in \propref{RedProp}, and let $R$ be a (finite) set of representatives for the cosets of $N \backslash G$. 

Let $s$ be a representative for some double coset from $S$, and write $s = nr$ for some $n \in N$ and $r \in R$.
It follows at once from inequality \eqref{NaimEq} that
\begin{equation}
d(U \cap N) + d(rWr^{-1} \cap N) + 1 < d(N)
\end{equation}
so we infer from \corref{PlusOneCor}, and from torsion-freeness of $G$, that the set
\begin{equation}
S_r \defeq \big\{m \in (U \cap N)\backslash N/ (rWr^{-1} \cap N)\ | \ U \cap mrWr^{-1}m^{-1} \neq 1 \big\}
\end{equation}
is finite. Let $L_r$ be a (finite) set of representatives for the double cosets in $S_r$.
By our choice of $s$, we have
\begin{equation}
U \cap nrWr^{-1}n^{-1} \neq 1,
\end{equation}
so $n$ represents a double coset from $S_r$. Hence, $u \cdot n \cdot rwr^{-1} \in L_r$ for some $u \in U, \ w \in W$.
This means that $usw \in L_r \cdot r$, so every double coset in $S$ can be represented by an element from $L_r \cdot r$ for some $r \in R$.
\end{proof}

We now combine the finiteness of $S$ with Howson's theorem for Demushkin groups (see \cite[Theorem 1.8]{ShZ}) into a single homological statement.

\begin{cor} \label{HomRefCor}

Let $G$ be a nonsolvable pro-$p$ Demushkin group, and let $U,W$ be finitely generated infinite index subgroups. Then
\begin{equation} \label{FirstTorAimEq}
\dim_{\F_p} H_1 \big(G, \F_p \llb G/U \rrb \ \widehat\otimes_{\F_p} \ \F_p \llb G/W \rrb \big) = \sum_{g \in U \backslash G/W} d(U \cap gWg^{-1}) 
\end{equation}
is finite.

\end{cor}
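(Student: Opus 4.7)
The plan is to rewrite the tensor product as an induced module and apply Shapiro's lemma, which immediately reduces the computation to the formula already established in \eqref{Eq3}. The finiteness then falls out of the combination of \corref{FinSCor} and Howson's theorem for Demushkin groups.

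First I would invoke the natural isomorphism \eqref{SomeIndEq} with $H = U$ and with $M = \F_p \llb G/W \rrb$ viewed as a $U$-module by restriction, to obtain
\begin{equation*}
\F_p \llb G/U \rrb \ \widehat\otimes_{\F_p} \ \F_p \llb G/W \rrb \;\cong_G\; \mathrm{Ind}_U^G \F_p \llb G/W \rrb.
\end{equation*}
Applying Shapiro's lemma to the right-hand side yields
\begin{equation*}
H_1 \big(G, \F_p \llb G/U \rrb \ \widehat\otimes_{\F_p} \ \F_p \llb G/W \rrb \big) \cong H_1 \big( U, \F_p \llb G/W \rrb \big),
\end{equation*}
and equation \eqref{Eq3} identifies the dimension of the latter with $\sum_{g \in U \backslash G/W} d(U \cap gWg^{-1})$. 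This establishes the equality in \eqref{FirstTorAimEq}.

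For finiteness, I would split the sum according to whether $U \cap gWg^{-1}$ is trivial or not. The trivial intersections contribute $d(1) = 0$ and hence nothing at all to the sum. The nontrivial intersections are indexed by the set $S$, which is finite by \corref{FinSCor}; moreover, each intersection $U \cap gWg^{-1}$ is a finitely generated pro-$p$ group by Howson's theorem for Demushkin groups \cite[Theorem 1.8]{ShZ}, so every individual summand $d(U \cap gWg^{-1})$ is finite. A finite sum of finite quantities being finite, the right-hand side (and hence the left-hand side) of \eqref{FirstTorAimEq} is finite.

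The only nontrivial inputs here are \corref{FinSCor} and the pro-$p$ Howson theorem of Shusterman--Zalesskii, both of which are already available, so there is no real obstacle; the proof is essentially a bookkeeping exercise assembling \eqref{SomeIndEq}, Shapiro's lemma, \eqref{Eq3}, \corref{FinSCor}, and \cite[Theorem 1.8]{ShZ}.
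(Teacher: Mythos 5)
Your proposal is correct and follows essentially the same route as the paper: Shapiro's lemma (via the identification of the tensor product with $\mathrm{Ind}_U^G \F_p \llb G/W \rrb$, which the paper leaves implicit but you spell out through \eqref{SomeIndEq}) reduces the left side to $H_1(U, \F_p \llb G/W \rrb)$, equation \eqref{Eq3} gives the equality, and finiteness follows from \corref{FinSCor} together with \cite[Theorem 1.8]{ShZ}. No gaps.
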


\begin{proof}

By Shapiro's lemma, our homology group is isomorphic to
\begin{equation}
H_1 \big( U, \F_p \llb G/W \rrb \big)
\end{equation}
and its dimension is calculated in equation \eqref{Eq3}.
By \corref{FinSCor}, there are only finitely many nonzero summands on the right hand side of equation \eqref{FirstTorAimEq}, 
and \cite[Theorem 1.8]{ShZ} tells us that each summand is finite.
\end{proof}

\section{The Relation gradient}

\corref{HomRefCor} gives a homological interpretation of a sum very similar to the one appearing in the Hanna Neumann conjecture. In order to write the required sum (from the left hand side of equation \eqref{HNG}) in a homological form, we introduce a homological gradient (analogous to a Betti number).

\begin{defi} 

Let $G$ be a pro-$p$ group, and let $M$ be a finitely related $\F_p \llb G \rrb$-module.
We set
\begin{equation}
\beta_1^G(M) \defeq \inf_{H \leq_o G} \frac{\dim_{\F_p} H_1(H,M)}{[G : H]}.
\end{equation}

\end{defi}
We call this nonnegative real number the relation gradient of $M$ over $G$.
In fact, we can restrict ourselves (in the infimum above) to normal subgroups, or (more generally) to any cofinal family of open subgroups.
This follows from the following folklore lemma.

\begin{lem} \label{FolkLem}
Let $G$ be a pro-$p$ group, let $M$ be an $\F_p \llb G \rrb$-module, and let $K \leq H$ be open subgroups of $G$. Then, for any $n \geq 0$, we have
\begin{equation} \label{FolkLemAim}
\frac{\dim_{\F_p} H_n(K,M)}{[G : K]} \leq \frac{\dim_{\F_p} H_n(H,M)}{[G:H]}.
\end{equation}
\end{lem}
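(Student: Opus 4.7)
The plan is to reduce the inequality to the identity $[G:K] = [G:H]\cdot [H:K]$, by bounding $\dim_{\F_p} H_n(K,M)$ above by $[H:K]\cdot \dim_{\F_p} H_n(H,M)$. All of the tools needed for this bound appear earlier in the preliminaries, so the argument should be essentially a bookkeeping exercise with Shapiro's lemma, the induced-module filtration, and the long exact sequence in homology.

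First, applying Shapiro's lemma (in the form stated earlier in the preliminaries) to the inclusion $K \leq H$ of open subgroups gives
\begin{equation*}
H_n(K, M) \cong H_n(H, \mathrm{Ind}_K^H M).
\end{equation*}
Next, by the isomorphism \eqref{SomeIndEq} applied to $H$ and its open subgroup $K$ (using that $M$, viewed as a $K$-module by restriction, is in particular the restriction of an $H$-module), one has
\begin{equation*}
\mathrm{Ind}_K^H M \cong M \ \widehat\otimes_{\F_p}\ \F_p[H/K].
\end{equation*}
Since $K$ is open in $H$, the remark immediately following \eqref{SomeIndEq} provides a filtration of this induced module by $H$-submodules of length $[H:K]$ whose consecutive quotients are all isomorphic to $M$.

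Now I would apply the long exact sequence in homology repeatedly along this filtration. From each short exact sequence $0 \to A \to B \to C \to 0$ of $\F_p\llb H\rrb$-modules, the three-term piece $H_n(H,A) \to H_n(H,B) \to H_n(H,C)$ yields the inequality $\dim_{\F_p} H_n(H,B) \leq \dim_{\F_p} H_n(H,A) + \dim_{\F_p} H_n(H,C)$ (valid even when the dimensions are infinite, with the obvious convention). Iterating this along the length-$[H:K]$ filtration whose successive quotients are isomorphic to $M$ gives
\begin{equation*}
\dim_{\F_p} H_n(K,M) = \dim_{\F_p} H_n\bigl(H, \mathrm{Ind}_K^H M\bigr) \leq [H:K] \cdot \dim_{\F_p} H_n(H,M).
\end{equation*}
Dividing both sides by $[G:K] = [G:H]\cdot[H:K]$ yields the desired inequality \eqref{FolkLemAim}.

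I do not anticipate any significant obstacles. The only point requiring a touch of care is the handling of the infinite-dimensional case: if $\dim_{\F_p} H_n(H,M) = \infty$ the conclusion is trivial, while if it is finite the filtration argument forces $\dim_{\F_p} H_n(K,M)$ to be finite as well, so the numerical inequality makes sense throughout.
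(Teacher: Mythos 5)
Your proposal is correct and follows essentially the same route as the paper's proof: Shapiro's lemma to rewrite $H_n(K,M)$ as $H_n(H, M \,\widehat\otimes_{\F_p}\, \F_p[H/K])$, the length-$[H:K]$ filtration with quotients isomorphic to $M$, the long exact sequences to get the factor $[H:K]$, and division by $[G:K]$. Nothing to add.
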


\begin{proof}
By Shapiro's lemma,
\begin{equation} \label{ItIndEq}
\dim_{\F_p} H_n \big( K, M \big) = \dim_{\F_p} H_n \big( H, M \ \widehat\otimes_{\F_p} \ \F_p[H/K] \big)
\end{equation}
and the $H$-module 
\begin{equation} 
M \ \widehat\otimes_{\F_p} \ \F_p[H/K]
\end{equation}
admits a filtration (by $H$-submodules) 
of length $[H : K]$ with consecutive quotients isomorphic to $M$.
Hence, the bounds coming from the long exact sequences (associated to our filtration) yield
\begin{equation} 
\dim_{\F_p} H_n \big( H, M \ \widehat\otimes_{\F_p} \ \F_p[H/K] \big) \leq 
[H : K]\dim_{\F_p} H_n \big( H, M \big).
\end{equation}
Dividing by $[G : K]$ and recalling equation \eqref{ItIndEq}, we finish the proof.
\end{proof}

The family of those open subgroups of a profinite group $G$ that are contained in a given open subgroup $H$ of $G$ is clearly cofinal. As a result, we obtain the index-proportionality of the relation gradient.

\begin{cor} \label{IPCor}

For a pro-$p$ group $G$, an open subgroup $H$ of $G$, and a finitely related $\F_p \llb G \rrb$-module $M$ we have 
$\beta_1^H(M) = [G : H]\beta_1^G(M)$.

\end{cor}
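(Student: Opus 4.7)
The plan is to unwind the two infima into the same quantity (up to the factor $[G:H]$) via a cofinality argument, using \lemref{FolkLem} just established.

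First I would rewrite $\beta_1^H(M)$ in terms of the index in $G$. For any open $K \leq_o H$ we have the multiplicativity $[G:K] = [G:H][H:K]$, so
\begin{equation*}
\frac{\dim_{\F_p} H_1(K,M)}{[H:K]} = [G:H] \cdot \frac{\dim_{\F_p} H_1(K,M)}{[G:K]}.
\end{equation*}
Taking the infimum over $K \leq_o H$ yields
\begin{equation*}
\beta_1^H(M) = [G:H] \cdot \inf_{K \leq_o H} \frac{\dim_{\F_p} H_1(K,M)}{[G:K]}.
\end{equation*}

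Next I would compare the two infima. Since every open subgroup of $H$ is in particular an open subgroup of $G$, the inequality
\begin{equation*}
\inf_{K \leq_o H} \frac{\dim_{\F_p} H_1(K,M)}{[G:K]} \geq \inf_{K' \leq_o G} \frac{\dim_{\F_p} H_1(K',M)}{[G:K']} = \beta_1^G(M)
\end{equation*}
is immediate. For the reverse inequality, I would use that the open subgroups of $G$ contained in $H$ form a cofinal family: given any $K' \leq_o G$, the intersection $K' \cap H$ is open in $G$ (and hence in $H$), and \lemref{FolkLem} applied to the inclusion $K' \cap H \leq K'$ (both open in $G$) gives
\begin{equation*}
\frac{\dim_{\F_p} H_1(K' \cap H,M)}{[G:K' \cap H]} \leq \frac{\dim_{\F_p} H_1(K',M)}{[G:K']}.
\end{equation*}
As $K' \cap H \leq_o H$, the left-hand side is at least the infimum over $K \leq_o H$, so the infimum over such $K$ does not exceed the right-hand side. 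Taking the infimum on the right over $K' \leq_o G$ gives the reverse inequality, and combining with the previous display yields $\beta_1^H(M) = [G:H]\beta_1^G(M)$.

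There is no real obstacle here: the entire content is the cofinality of $\{K \leq_o G : K \subseteq H\}$ among open subgroups of $G$, which is a standard property of profinite groups, combined with the monotonicity of the Betti numbers under refinement provided by \lemref{FolkLem}. The only thing worth verifying carefully is that both displayed infima are well-defined (finite) whenever one of them is, but this is automatic from the fact that $M$ is finitely related and from finiteness of the relevant indices.
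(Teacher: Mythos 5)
Your proof is correct and follows essentially the same route as the paper: pull out the factor $[G:H]$ by rewriting the denominator as $[G:K]$, then identify the remaining infimum with $\beta_1^G(M)$ via the cofinality of the open subgroups of $G$ contained in $H$, justified by \lemref{FolkLem}. The paper simply leaves the cofinality step implicit (citing the remark preceding \corref{IPCor}), whereas you spell it out with the intersection $K' \cap H$; no substantive difference.
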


\begin{proof}

From the definition, we get
\begin{equation}
\beta_1^H(M) = \inf_{K \leq_o H} \frac{\dim_{\F_p} H_1(K,M)}{[H : K]} = 
[G : H] \inf_{K \leq_o H} \frac{\dim_{\F_p} H_1(K,M)}{[G : K]}
\end{equation}
and by cofinality, the latter expression equals $[G : H]\beta_1^G(M)$.
\end{proof}

The following proposition is a `Shapiro lemma' for the relation gradient.

\begin{prop} \label{b1Shap}

Let $G$ be a pro-$p$ group, let $U \leq_c G$, and let $M$ be a finitely related $\F_p \llb U \rrb$-module. Then 
\begin{equation} \label{Ab1Shap}
\beta_1^G(\mathrm{Ind}_U^G M) = \beta_1^U(M).
\end{equation}

\end{prop}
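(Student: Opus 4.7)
The plan is to compute $\beta_1^G(\mathrm{Ind}_U^G M)$ as an infimum whose summands reduce to those defining $\beta_1^U(M)$. I would first restrict the infimum over open subgroups of $G$ to the cofinal subfamily of open \emph{normal} subgroups $H \lhd_o G$, which is legitimate by the remark following \lemref{FolkLem}. For each such $H$, a profinite Mackey decomposition gives
\begin{equation*}
\mathrm{Ind}_U^G M \ \cong \ \bigoplus_{g \in H \backslash G / U} \mathrm{Ind}_{H \cap gUg^{-1}}^H ({}^g M)
\end{equation*}
as $H$-modules, where ${}^g M$ denotes $M$ regarded as a $gUg^{-1}$-module via $x \mapsto g^{-1}xg$. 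Since $H$ is normal, $H \cap gUg^{-1} = g(H \cap U)g^{-1}$, and conjugation by $g$ identifies each pair $(H \cap gUg^{-1}, {}^g M)$ with $(H \cap U, M)$. This is the direct analogue, for a general coefficient module, of the decomposition \eqref{UDecIs} used in the trivial case.

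Next I would apply $H_1(H, -)$, use that homology commutes with Melnikov's direct sum as in \eqref{Eq0}, and invoke Shapiro's lemma on each summand to obtain
\begin{equation*}
H_1(H, \mathrm{Ind}_U^G M) \ \cong \ \bigoplus_{g \in H \backslash G / U} H_1(H \cap U, M).
\end{equation*}
Because $H$ is normal in $G$, the map $HgU \mapsto g \cdot HU$ gives a bijection $H \backslash G / U \simeq G/HU$, so the number of double cosets equals $[G : HU] = [G : H]/[U : U \cap H]$, which is finite. Dividing the resulting dimension identity by $[G : H]$ yields
\begin{equation*}
\frac{\dim_{\F_p} H_1(H, \mathrm{Ind}_U^G M)}{[G : H]} \ = \ \frac{\dim_{\F_p} H_1(H \cap U, M)}{[U : U \cap H]}.
\end{equation*}

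Taking the infimum over $H \lhd_o G$ on both sides then finishes the proof. On the left this infimum coincides with $\beta_1^G(\mathrm{Ind}_U^G M)$ by the cofinality remark. On the right I would argue that the family $\{H \cap U : H \lhd_o G\}$ is cofinal in the poset of open subgroups of $U$: given any $K \leq_o U$, pick $V \leq_o G$ with $V \cap U \subseteq K$ (using that $U$ carries the subspace topology) and replace $V$ by its normal core in $G$. Combined with the monotonicity of $\dim_{\F_p} H_1(K,M)/[U : K]$ in $K$ supplied by \lemref{FolkLem}, this identifies the infimum on the right with $\beta_1^U(M)$. The only delicate point is verifying that the profinite Mackey decomposition and the commutation of homology with Melnikov's direct sum both extend from the trivial coefficient case \eqref{UDecIs}--\eqref{Eq0} to an arbitrary finitely related $\F_p \llb U \rrb$-module $M$; once these facts are in place, the remaining manipulations are entirely routine.
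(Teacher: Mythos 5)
Your proposal is correct and follows essentially the same route as the paper: restrict to open normal subgroups by cofinality, decompose the restriction of $\mathrm{Ind}_U^G M$ to $H$ via Mackey's formula (the paper factors through $HU$ by transitivity and pulls out the multiplicity $[G:HU]$ from normality, while you sum directly over the finitely many double cosets in $H\backslash G/U$ and identify them by conjugation, which amounts to the same computation), then apply Shapiro's lemma and conclude from the cofinality of $\{H\cap U : H\lhd_o G\}$ in the open subgroups of $U$ together with \lemref{FolkLem}. The point you flag as delicate is harmless here, since $H$ is open, so the Mackey decomposition is a finite direct sum and no profinite direct-sum argument is needed.
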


\begin{proof}
From transitivity of induction, we get
\begin{equation} \label{4.8Eq}
\beta_1^G(\mathrm{Ind}_U^G M) = 
\inf_{H \lhd_o G} \frac{\dim_{\F_p} H_1(H, \mathrm{Ind}_{HU}^G\mathrm{Ind}_U^{HU} M  )}{[G : H]}.
\end{equation}
Normality of $H$ in $G$ implies that for any $\F_p \llb HU \rrb$-module $N$ we have
\begin{equation}
\mathrm{Ind}_{HU}^G N \cong_H N^{\oplus [G : HU]}
\end{equation}
so taking $N = \mathrm{Ind}_U^{HU} M$ and using the fact that homology commutes with direct sums, 
we see that the right hand side of equation \eqref{4.8Eq} simplifies to
\begin{equation}
\inf_{H \lhd_o G} \frac{\dim_{\F_p} H_1 \big( H, \mathrm{Ind}_U^{HU} M \big)}{[HU : H]}.
\end{equation}
Using (for instance) Mackey's formula, the expression above becomes
\begin{equation}
\inf_{H \lhd_o G} \frac{\dim_{\F_p} H_1 \big( H, \mathrm{Ind}_{H \cap U}^{H} M \big)}{[U : H \cap U]}.
\end{equation}
By Shapiro's lemma, our infimum is just
\begin{equation}
\inf_{H \lhd_o G} \frac{\dim_{\F_p} H_1 \big( H \cap U, M \big)}{[U : H \cap U]}
\end{equation}
so from the cofinality of $\{H \cap U : H \lhd_o G\}$ among the open subgroups of $U$,
we conclude that the infimum above evaluates to $\beta_1^U(M)$.
\end{proof}

The following establishes the subadditivity of the relation gradient in short exact sequences.

\begin{prop} \label{SaProp}

Let $G$ be a pro-$p$ group, and let
\begin{equation} \label{SESAprop}
0 \to M_1 \to M_2 \to M_3 \to 0
\end{equation}
be a short exact sequence of $\F_p \llb G \rrb$-modules, with $M_1$ and $M_3$ finitely related.
Then $M_2$ is finitely related as well, and
\begin{equation} \label{SaEq}
\beta_1^G(M_2) \leq \beta_1^G(M_1) + \beta_1^G(M_3).
\end{equation}
Moreover, if the short exact sequence splits, then
\begin{equation} \label{additEq}
\beta_1^G(M_2) = \beta_1^G(M_1) + \beta_1^G(M_3).
\end{equation}

\end{prop}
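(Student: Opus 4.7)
The plan is to apply the homological long exact sequence at the level of each open subgroup $H \leq_o G$, reducing the proposition to a statement about $\mathbb{F}_p$-dimensions, and then to exploit \lemref{FolkLem} to control the infima.

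First I would handle the finite-relatedness of $M_2$. Applying the long exact sequence for $H = G$ to the short exact sequence \eqref{SESAprop} gives the exact piece
\begin{equation*}
H_1(G, M_1) \to H_1(G, M_2) \to H_1(G, M_3),
\end{equation*}
and since both outer terms have finite $\mathbb{F}_p$-dimension by hypothesis, so does $H_1(G, M_2)$. The same exact piece for an arbitrary open $H \leq_o G$ yields the numerical inequality
\begin{equation*}
\dim_{\F_p} H_1(H, M_2) \leq \dim_{\F_p} H_1(H, M_1) + \dim_{\F_p} H_1(H, M_3).
\end{equation*}

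Next I would pass from this fixed-$H$ bound to the bound on the relation gradients. The subtlety is that $\inf_H (f(H)+g(H))$ is in general larger than $\inf_H f(H) + \inf_H g(H)$, so one cannot simply take the infimum termwise. To circumvent this, given $\varepsilon > 0$, choose $H_1, H_3 \leq_o G$ with
\begin{equation*}
\frac{\dim_{\F_p} H_1(H_i, M_i)}{[G : H_i]} < \beta_1^G(M_i) + \varepsilon \quad (i = 1, 3),
\end{equation*}
and set $K \defeq H_1 \cap H_3$, which is open in $G$. By \lemref{FolkLem}, shrinking from $H_i$ to $K$ can only decrease each of these ratios. Dividing the displayed dimension inequality (with $H$ replaced by $K$) by $[G : K]$ therefore gives
\begin{equation*}
\frac{\dim_{\F_p} H_1(K, M_2)}{[G : K]} < \beta_1^G(M_1) + \beta_1^G(M_3) + 2\varepsilon,
\end{equation*}
and letting $\varepsilon \to 0$ yields the subadditivity \eqref{SaEq}.

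For the split case, if $M_2 \cong M_1 \oplus M_3$ as $\F_p \llb G \rrb$-modules, then for every $H \leq_o G$ we have $H_1(H, M_2) \cong H_1(H, M_1) \oplus H_1(H, M_3)$, so the inequality of dimensions becomes an equality for every single $H$. The general bound $\inf_H (f(H) + g(H)) \geq \inf_H f(H) + \inf_H g(H)$ then gives the opposite inequality $\beta_1^G(M_2) \geq \beta_1^G(M_1) + \beta_1^G(M_3)$, and combined with \eqref{SaEq} this yields \eqref{additEq}. The main obstacle here is the one addressed in the second paragraph: transferring an inequality valid pointwise (in $H$) to an inequality between infima, which requires the cofinality trick via \lemref{FolkLem} applied to a common refinement $K = H_1 \cap H_3$.
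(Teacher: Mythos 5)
Your proposal is correct and follows essentially the same route as the paper: long exact sequences over each open subgroup, plus \lemref{FolkLem} applied to a common refinement of near-optimal subgroups to pass from pointwise dimension inequalities to inequalities between infima. The only (harmless) variation is in the split case, where you invoke the elementary superadditivity of infima directly instead of the paper's $\epsilon$-argument using a near-optimal subgroup for $M_2$; both are valid.
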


\begin{proof}

From the long exact sequence associated to \eqref{SESAprop}, we get that
\begin{equation}
\dim_{\F_p} H_1(G, M_2) \leq \dim_{\F_p} H_1(G, M_1) + \dim_{\F_p} H_1(G, M_3) < \infty
\end{equation}
as $M_1, M_3$ are finitely related. Hence, $M_2$ is a finitely related $G$-module.
 
Let $\epsilon > 0$, and for each $i \in \{1, 2, 3\}$ pick some $K_i \leq_o G$ such that
\begin{equation}
\frac{\dim_{\F_p} H_1(K_i, M_i)}{[G : K_i]} \leq \beta_1^G(M_i) + \frac{\epsilon}{2}.
\end{equation}
Setting $K \defeq K_1 \cap K_2 \cap K_3$ we see that for every $i \in \{1, 2, 3\}$ we still have
\begin{equation} \label{HEpsEq}
\frac{\dim_{\F_p} H_1(K, M_i)}{[G : K]} \leq \beta_1^G(M_i) + \frac{\epsilon}{2}
\end{equation}
in light of \lemref{FolkLem}.
The aforementioned long exact sequence now gives
\begin{equation}
\beta_1^G(M_2) \leq \frac{\dim_{\F_p} H_1(K, M_2)}{[G : K]} \leq
\sum_{i=1,3} \frac{\dim_{\F_p} H_1(K, M_i)}{[G : K]}
\end{equation}
so applying inequality \eqref{HEpsEq} to the right hand side we obtain \eqref{SaEq}.

Suppose that our exact sequence splits. By inequality \eqref{HEpsEq} we have
\begin{equation}
\begin{split}
\beta_1^G(M_2) &\geq \frac{\dim_{\F_p} H_1(K, M_2)}{[G : K]} - \epsilon \\
&= \frac{\dim_{\F_p} H_1(K, M_1)}{[G : K]} + \frac{\dim_{\F_p} H_1(K, M_3)}{[G : K]} - \epsilon \\
&\geq \beta_1^G(M_1) + \beta_1^G(M_3) - \epsilon
\end{split}
\end{equation}
so combining this with inequality \eqref{SaEq} we arrive at equation \eqref{additEq}.
\end{proof}

For Demushkin groups, the relation gradient enjoys monotonicity.

\begin{prop} \label{MonProp}

Let $G$ be a nonsolvable pro-$p$ Demushkin group, let $M$ be a finitely related $\F_p \llb G \rrb$-module,
and let $N$ be a $G$-submodule of $M$.
Suppose that $M/N$ is finite, or that $H_2(G, M/N) = 0$. 
Then $\beta_1^G(N) \leq \beta_1^G(M)$.

\end{prop}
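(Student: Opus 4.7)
The approach is to apply, for each open subgroup $H \leq_o G$, the long exact sequence in homology associated to $0 \to N \to M \to M/N \to 0$, which yields
\begin{equation*}
\dim_{\F_p} H_1(H, N) \leq \dim_{\F_p} H_1(H, M) + \dim_{\F_p} H_2(H, M/N).
\end{equation*}
After dividing by $[G:H]$ and taking the infimum over $H$, the proposition will follow provided the error term $\dim_{\F_p} H_2(H, M/N)/[G:H]$ can be driven to zero along a cofinal family of open subgroups.

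For the case where $M/N$ is finite, I would invoke the fact (noted after \eqref{RankFor}) that every open subgroup $H$ of $G$ is itself a nonsolvable Demushkin group, so that \corref{H2BCor} applies directly to $H$ and delivers the uniform bound $\dim_{\F_p} H_2(H, M/N) \leq \dim_{\F_p}(M/N) =: c$. For any $\epsilon > 0$, I would pick $H_0 \leq_o G$ with $\dim_{\F_p} H_1(H_0, M)/[G:H_0] < \beta_1^G(M) + \epsilon/2$, and then use \lemref{FolkLem} to pass to a subgroup $K \leq H_0$ with $[G:K] > 2c/\epsilon$; shrinking preserves the $H_1(\cdot, M)$ bound while forcing the $H_2$ contribution below $\epsilon/2$, yielding $\beta_1^G(N) < \beta_1^G(M) + \epsilon$ and hence the desired inequality as $\epsilon \to 0$.

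For the case $H_2(G, M/N) = 0$, the plan is to show outright that $H_2(H, M/N) = 0$ for every open $H \leq_o G$, so that the bound $\dim H_1(H, N) \leq \dim H_1(H, M)$ transfers directly to the infimum. Using Shapiro's lemma combined with \eqref{SomeIndEq}, one has
\begin{equation*}
H_2(H, M/N) \cong H_2\bigl(G,\, (M/N) \widehat\otimes_{\F_p} \F_p[G/H]\bigr).
\end{equation*}
Since $G$ is pro-$p$, the finite $G$-module $\F_p[G/H]$ admits a filtration by $G$-submodules with all quotients isomorphic to $\F_p$, and tensoring produces a $G$-stable filtration of $(M/N) \widehat\otimes_{\F_p} \F_p[G/H]$ whose successive quotients are all isomorphic to $M/N$. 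Induction along this filtration, combined with the vanishing $H_3(G,-) = 0$ from cohomological dimension $2$ (as exploited in the proof of \corref{MonH2Cor}), propagates the hypothesis $H_2(G, M/N) = 0$ through every stage and yields the desired vanishing.

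The main technical step is the filtration/induction argument in the second case; the first case is essentially routine once one recalls the stability of the nonsolvable Demushkin property under passing to open subgroups.
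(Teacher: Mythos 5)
Your argument is correct and follows essentially the same route as the paper: bound $\dim_{\F_p} H_1(H,N)$ via the long exact sequence and drive the $H_2(H,M/N)$ error term to zero, using \corref{H2BCor} on open subgroups (which are again nonsolvable Demushkin) in the finite case. For the second case the paper simply invokes \lemref{FolkLem} with $n=2$, which gives $\dim_{\F_p} H_2(H,M/N) \leq [G:H]\,\dim_{\F_p} H_2(G,M/N) = 0$ for every open $H$; its proof is precisely the Shapiro-plus-filtration argument you spell out, where the appeal to $H_3(G,-)=0$ is in fact not needed, since the two-sided vanishing in the long exact sequence already forces $H_2$ of each filtration stage to vanish.
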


\begin{proof}

Let $\{G_n\}_{n=1}^\infty$ be a descending sequence of open subgroups of $G$ intersecting trivially.
Such a sequence is cofinal, so by \lemref{FolkLem} we have
\begin{equation} \label{ChainEq}
\beta_1^G(N) = \inf_{n \geq 1} \frac{\dim_{\F_p} H_1(G_n,N)}{[G : G_n]} =
\lim_{n \to \infty} \frac{\dim_{\F_p} H_1(G_n,N)}{[G : G_n]}.
\end{equation}
The long exact sequence associated to the short exact sequence
\begin{equation}
0 \to N \to M \to M/N \to 0
\end{equation}
tells us that the rightmost part of equation \eqref{ChainEq} does not exceed
\begin{equation} \label{2SumdsEq}
\lim_{n \to \infty} \frac{\dim_{\F_p} H_1(G_n,M)}{[G : G_n]} + 
\lim_{n \to \infty} \frac{\dim_{\F_p} H_2(G_n,M/N)}{[G : G_n]}.
\end{equation}
As the first summand above equals $\beta_1^G(M)$, we need to show that the second summand vanishes.
If $M/N$ is finite, this follows from \corref{H2BCor},
while if $H_2(G, M/N) = 0$ we can use \lemref{FolkLem}.
\end{proof}

The reason for introducing the relation gradient is seen from the next corollary.

\begin{cor} \label{Reason}

In the notation of \thmref{FirstRes}, with $[G : U], [G : V]$ infinite, we have
\begin{equation} \label{ReasonAim}
\begin{split}
&\beta_1^G \big( \F_p \llb G/U \rrb \big ) = d(U) - 1, \quad  \beta_1^G \big( \F_p \llb G/W \rrb \big ) = d(W) - 1, \\
&\beta_1^G \big(  \F_p \llb G/U \rrb \ \widehat\otimes_{\F_p} \ \F_p \llb G/W \rrb \big ) = 
\sum_{g \in S} \big( d(U \cap gWg^{-1}) - 1 \big).
\end{split}
\end{equation}
In particular, in order to obtain \thmref{FirstRes}, it suffices to show that
\begin{equation}
\beta_1^G \big(  \F_p \llb G/U \rrb \ \widehat\otimes_{\F_p} \ \F_p \llb G/W \rrb \big ) \leq 
\beta_1^G \big( \F_p \llb G/U \rrb \big )\beta_1^G \big( \F_p \llb G/W \rrb \big ).
\end{equation}
\end{cor}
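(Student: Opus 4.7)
The three claimed identities are proved by related but distinct computations. For the first identity, apply \eqref{SomeIndEq} with $M = \F_p$ to get $\F_p \llb G/U \rrb \cong \mathrm{Ind}_U^G \F_p$ as $G$-modules, and \propref{b1Shap} reduces the problem to $\beta_1^U(\F_p)$. By the last line of Section 2.2, $U$ is a nontrivial finitely generated free pro-$p$ group, which is moreover infinite since $G$ is torsion-free. For every open $H \leq_o U$, Schreier's formula gives $\dim_{\F_p} H_1(H, \F_p) = d(H) = (d(U) - 1)[U:H] + 1$, so the ratio $d(H)/[U:H]$ tends to $d(U) - 1$ along any cofinal family of open subgroups of arbitrarily large index in $U$; this yields $\beta_1^U(\F_p) = d(U) - 1$. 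The identity for $W$ is identical.

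For the tensor product, \eqref{SomeIndEq} (this time applied to the $G$-module $M = \F_p \llb G/W \rrb$) provides an isomorphism of $G$-modules $\F_p \llb G/U \rrb \widehat\otimes_{\F_p} \F_p \llb G/W \rrb \cong \mathrm{Ind}_U^G \F_p \llb G/W \rrb$. By \corref{HomRefCor}, $\F_p \llb G/W \rrb$ is a finitely related $U$-module, so \propref{b1Shap} reduces the calculation to $\beta_1^U(\F_p \llb G/W \rrb)$. The decomposition \eqref{UDecIs} gives, as $U$-modules,
\begin{equation*}
\F_p \llb G/W \rrb \cong \bigoplus_{g \in U \backslash G/W} \F_p \llb U/K_g \rrb, \qquad K_g \defeq U \cap gWg^{-1}.
\end{equation*}
Summands with $K_g = 1$ are free $U$-modules, so their $H_1$ over every open subgroup of $U$ vanishes (apply \eqref{UDecIs} again to reduce each to a direct sum of copies of $\F_p \llb H_m \rrb$). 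The remaining summands are indexed by $g \in S$, which is finite by \corref{FinSCor}. For each $g \in S$, $K_g$ is nontrivial and finitely generated by Howson's theorem \cite[Theorem 1.8]{ShZ}, and is a free pro-$p$ group as a closed subgroup of $U$; \propref{b1Shap} and the first paragraph then yield $\beta_1^U(\F_p \llb U/K_g \rrb) = \beta_1^{K_g}(\F_p) = d(K_g) - 1$.

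To combine the finitely many nontrivial contributions, fix a descending chain $\{H_m\}_{m \geq 1}$ of open subgroups of $U$ with trivial intersection, which is cofinal in the open subgroups of $U$. Since homology commutes with profinite direct sums,
\begin{equation*}
\frac{\dim_{\F_p} H_1(H_m, \F_p \llb G/W \rrb)}{[U : H_m]} = \sum_{g \in S} \frac{\dim_{\F_p} H_1(H_m, \F_p \llb U/K_g \rrb)}{[U : H_m]}
\end{equation*}
for every $m$. By \lemref{FolkLem}, each ratio is non-increasing in $m$ and so converges to the respective $\beta_1^U$; since the sum over $S$ is finite, this limit passes through the sum, giving $\beta_1^U(\F_p \llb G/W \rrb) = \sum_{g \in S}(d(K_g) - 1)$ as desired. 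The final ``in particular'' statement is then immediate by substituting the three computed values into the desired inequality of \thmref{FirstRes}. The main technical point I anticipate is confirming that the potentially infinitely many summands indexed by $g \notin S$ contribute nothing to $\beta_1^U(\F_p \llb G/W \rrb)$, and that the finitely many nontrivial contributions genuinely combine additively via this cofinal chain argument rather than merely giving a one-sided bound.
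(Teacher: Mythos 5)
Your proposal is correct and follows essentially the same route as the paper: \propref{b1Shap} together with Schreier's formula gives the first two identities, and the Mackey decomposition \eqref{UDecIs} followed by \propref{b1Shap} and Schreier again handles the tensor product. The only deviation is in one step's justification: where the paper cites the split case of \propref{SaProp} to pass from the profinite direct sum to the finite sum over $S$, you argue directly along a cofinal descending chain of open subgroups of $U$, explicitly observing that the summands with trivial intersection are free and contribute no $H_1$ over any open subgroup --- a more explicit (and perfectly valid) way of carrying out the same reduction.
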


\begin{proof}

Invoking \propref{b1Shap} with $M = \F_p$, we get
\begin{equation}
\beta_1^G \big( \F_p \llb G/U \rrb \big ) = \beta_1^U(\F_p) = d(U) - 1 
\end{equation}
where the second equality is a consequence of Schreier's formula for the nontrivial free pro-$p$ group $U$.

In the proof of \corref{HomRefCor} we have seen that $\F_p \llb G/W \rrb$ is a finitely related $U$-module,
so by \propref{b1Shap} we have
\begin{equation}
\beta_1^G \big(  \F_p \llb G/U \rrb \ \widehat\otimes_{\F_p} \ \F_p \llb G/W \rrb \big ) =
\beta_1^U \big(  \F_p \llb G/W \rrb \big ).
\end{equation}
In light of equation \eqref{UDecIs} the relation gradient above equals
\begin{equation}
\beta_1^U \Bigg( \bigoplus_{g \in U \backslash G/W} \F_p \llb U/ U \cap gWg^{-1} \rrb \Bigg)
\end{equation}
which reduces, by the split case of \propref{SaProp}, to
\begin{equation} \label{SumOverS}
\sum_{g \in S} \beta_1^U \big(  \F_p \llb U/ U \cap gWg^{-1} \rrb \big).
\end{equation}
The subgroup $U \cap gWg^{-1}$
is finitely generated, so $\F_p$ is a finitely related module for it.
Applying \propref{b1Shap} to each term in the sum gives
\begin{equation}
\sum_{g \in S} \beta_1^{U \cap gWg^{-1}} \big(  \F_p \big)
\end{equation}
so using Schreier's formula as before, the sum above becomes
\begin{equation}
\sum_{g \in S} \big( d(U \cap gWg^{-1}) - 1 \big)
\end{equation}
as required.
\end{proof}

\section{Integrality}

The goal of this section is to show that the relation gradient of any finitely presented $\F_p \llb G \rrb$-module is an integer when $G$ is a nonsolvable pro-$p$ Demushkin group.

\subsection{Pro-$\mathcal{C}$ groups}

Let $\mathcal{C}$ be a class of pro-$p$ groups closed under taking subgroups and under forming extensions. 
This means that
\begin{enumerate}

\item if $G \in \mathcal{C}$ and $H \leq_c G$ then $H \in \mathcal{C}$;

\item if $G$ is a pro-$p$ group and $N \lhd_c G$ is such that $N, G/N \in \mathcal{C}$ then $G \in \mathcal{C}$.

\end{enumerate}
The second conditions implies that $\mathcal{C}$ is closed under taking direct products (of finitely many groups).
Combining this with the first condition, 
we get that $\mathcal{C}$ is also closed under taking fibered (or subdirect) products.
In other words, if $G$ is a pro-$p$ group with $M,N \lhd_c G$ such that $G/M, G/N \in \mathcal{C}$ then $G/ M \cap N \in \mathcal{C}$ as well.
One example is the class of torsion-free poly-procyclic pro-$p$ groups, 
and a larger one is the class of torsion-free $p$-adic analytic groups.

We say that a finitely generated pro-$p$ group $G$ is residually $\mathcal{C}$ (or pro-$\mathcal{C}$) if for every $g \in G$ there exists a homomorphism $\varphi \colon G \to Q$ with $Q \in \mathcal{C}$ such that $g \notin \mathrm{Ker}(\varphi)$. Equivalently, there exists a trivially intersecting chain of subgroups $\Omega_n \lhd_c G$ such that $G/ \Omega_n \in \mathcal{C}$. This is the same as saying that $G$ is an inverse limit of groups from $\mathcal{C}$.
The following proposition gives a criterion for a pro-$p$ group to be residually $\mathcal{C}$.

\begin{prop} \label{CCrit}

Let $G$ be a finitely generated pro-$p$ group, and let 
\begin{equation}
G = G_0 \geq_o G_1 \geq_o G_2  \geq_o \dots \geq_o G_n  \geq_o \cdots
\end{equation}
be a chain of open normal subgroups of $G$ with
\begin{equation} \label{IntTrivEq}
\bigcap_{n=0}^\infty G_n = 1.
\end{equation}
Suppose that for every $n \geq 1$ there exists a subgroup $Q_n \lhd_c G_{n-1}$ such that $Q_n \leq_c G_n$ and
$G_{n-1}/Q_n \in \mathcal{C}$.
Then $G$ is residually $\mathcal{C}$. 

\end{prop}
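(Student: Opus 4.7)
The plan is to construct by induction on $n$ a descending chain of closed normal subgroups $\Omega_n \lhd_c G$ satisfying $\Omega_n \leq_c G_n$ and $G/\Omega_n \in \mathcal{C}$. Granted this, the hypothesis $\bigcap_n G_n = 1$ forces $\bigcap_n \Omega_n = 1$, and for any nontrivial $g \in G$ we have $g \notin \Omega_n$ for $n$ large enough, so the surjections $G \to G/\Omega_n \in \mathcal{C}$ witness that $G$ is residually $\mathcal{C}$.

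For the base take $\Omega_0 \defeq G$. For the inductive step the first move is to promote $Q_n$, which is only normal in $G_{n-1}$, to the $G$-core
\begin{equation*}
\tilde Q_n \defeq \bigcap_{g \in G} g Q_n g^{-1}.
\end{equation*}
Since $G_{n-1} \lhd_o G$ normalizes $Q_n$, only finitely many conjugates appear in this intersection. Conjugation by $g^{-1}$ is an automorphism of $G_{n-1}$ sending $gQ_ng^{-1}$ to $Q_n$, so each quotient $G_{n-1}/gQ_ng^{-1}$ is isomorphic to $G_{n-1}/Q_n \in \mathcal{C}$, and $G_{n-1}/\tilde Q_n$ embeds into a finite direct product of such groups. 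Closure of $\mathcal{C}$ under (fibered) products and subgroups therefore places $G_{n-1}/\tilde Q_n$ in $\mathcal{C}$, while $\tilde Q_n \leq_c Q_n \leq_c G_n$.

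I would then set $\Omega_n \defeq \Omega_{n-1} \cap \tilde Q_n$, which is normal in $G$ and contained in $G_n$. The second isomorphism theorem yields
\begin{equation*}
\Omega_{n-1}/\Omega_n \;\cong\; \Omega_{n-1}\tilde Q_n/\tilde Q_n \;\leq_c\; G_{n-1}/\tilde Q_n,
\end{equation*}
so $\Omega_{n-1}/\Omega_n \in \mathcal{C}$ by subgroup closure. The short exact sequence
\begin{equation*}
1 \to \Omega_{n-1}/\Omega_n \to G/\Omega_n \to G/\Omega_{n-1} \to 1
\end{equation*}
then presents $G/\Omega_n$ as an extension of two members of $\mathcal{C}$, so $G/\Omega_n \in \mathcal{C}$ by closure under extensions. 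This closes the induction.

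The main subtlety, and what kills more direct attempts, is that $Q_n$ is normal only in $G_{n-1}$: if one tried to show $G/\tilde Q_n \in \mathcal{C}$ in isolation, one would be forced to fit the finite $p$-group $G/G_{n-1}$ into $\mathcal{C}$, and this fails for the motivating examples (nontrivial finite $p$-groups are neither torsion-free poly-procyclic nor torsion-free $p$-adic analytic). The trick of intersecting with the previously built $\Omega_{n-1}$ absorbs that finite $p$-group obstruction into the inductive hypothesis $G/\Omega_{n-1} \in \mathcal{C}$, leaving only the new kernel $\Omega_{n-1}/\Omega_n$, which is safely controlled as a subgroup of $G_{n-1}/\tilde Q_n$.
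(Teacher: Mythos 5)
Your proof is correct and follows essentially the same route as the paper: pass to the $G$-core $\tilde Q_n$ of $Q_n$ (in $\mathcal{C}$ as a subdirect product of conjugates of $G_{n-1}/Q_n$), set $\Omega_n = \Omega_{n-1}\cap\tilde Q_n$, and run the induction via the second isomorphism theorem and closure under extensions. No gaps to report.
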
 

\begin{proof}

For $n \geq 1$ let 
\begin{equation}
\widetilde{Q}_n \defeq \bigcap_{g \in G/G_{n-1}} gQ_ng^{-1} \lhd_c G
\end{equation}
be the normal core of $Q_n$ in $G$.
The group $G_{n-1}/ \widetilde{Q}_n$ is a fibered product of the finitely many groups
\begin{equation}
G_{n-1}/ gQ_ng^{-1}, \quad g \in G/G_{n-1}.
\end{equation}
Each of these groups is isomorphic to $G_{n-1}/Q_n \in \mathcal{C}$ so we conclude that $G_{n-1}/ \widetilde{Q}_n \in \mathcal{C}$ as well.
For every $n \geq 1$ set
\begin{equation}
\Omega_n \defeq \bigcap_{i \leq n} \widetilde{Q}_i \lhd_c G
\end{equation}
and note that by equation \eqref{IntTrivEq}, this is a chain of subgroups that satisfies
\begin{equation}
\bigcap_{n = 1}^\infty \Omega_n \subseteq \bigcap_{n = 1}^\infty \widetilde{Q}_n
\subseteq \bigcap_{n = 1}^\infty Q_n 
\subseteq \bigcap_{n = 1}^\infty G_{n-1} = 1.
\end{equation}

We shall argue, by induction on $n$, that $G/\Omega_n \in \mathcal{C}$.
For $n=1$ we have
\begin{equation}
G/\Omega_1 = G/\widetilde{Q}_1 = G_0/\widetilde{Q}_1 \in \mathcal{C}.
\end{equation}
Once $n \geq 2$ we have
\begin{equation}
\Omega_n = \Omega_{n-1} \cap \widetilde{Q}_{n}
\end{equation}
and by induction $G/\Omega_{n-1} \in \mathcal{C}$, so since $\mathcal{C}$ is closed under forming extensions,
it suffices to show that $\Omega_{n-1}/\Omega_{n} \in \mathcal{C}$.
Indeed, since $Q_{n-1} \leq_c G_{n-1}$ we have
\begin{equation}
\Omega_{n-1}/\Omega_{n} = \Omega_{n-1}/\Omega_{n-1} \cap \widetilde{Q}_{n} \cong
\Omega_{n-1}\widetilde{Q}_{n} / \widetilde{Q}_{n} \leq_c G_{n-1}/ \widetilde{Q}_{n} \in \mathcal{C}
\end{equation} 
so we conclude by recalling that $\mathcal{C}$ is closed under taking subgroups.
\end{proof}

\begin{cor}

Let $\mathcal{C}$ be the class of torsion-free poly-procyclic pro-$p$ groups, and let $G$ be a torsion-free pro-$p$ Demushkin group. Then $G$ is residually $\mathcal{C}$.

\end{cor}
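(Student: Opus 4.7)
The plan is to apply \propref{CCrit} with $\mathcal{C}$ the class of torsion-free poly-procyclic pro-$p$ groups (closed under subgroups and extensions). If $G$ is solvable, Labute's classification identifies it with $\Z_p$ or $\Z_p \rtimes \Z_p$, both in $\mathcal{C}$, making the result immediate, so assume $G$ is nonsolvable. For the chain in \propref{CCrit}, take $G_n \defeq \Phi^n(G)$, the iterated Frattini subgroup: each $G_n$ is open and characteristic in $G$, $\bigcap_n G_n = 1$ (any open normal $U \lhd_c G$ contains some $\Phi^c(G)$, since the iterated Frattini subgroup of the finite $p$-group $G/U$ is eventually trivial), and each $G_{n-1}$ is again a torsion-free Demushkin group.

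The key step is the construction of $Q_n$. Writing $H \defeq G_{n-1}$ and $d \defeq d(H)$, and fixing a Labute one-relator presentation $H = \langle x_1, \ldots, x_d \mid r\rangle$, I would build a surjection $\rho : H \twoheadrightarrow \Gamma$ onto an iterated semidirect product of copies of $\Z_p$ (hence torsion-free poly-procyclic) with $d(\Gamma) = d$, chosen to kill $r$. In the generic case $p$ odd, $q(H) = p^s \neq 0$, $d$ even, where $r = x_1^{p^s}[x_1, x_2]\prod_{i \geq 2}[x_{2i-1}, x_{2i}]$, take $\Gamma \defeq \Z_p \rtimes \Z_p^{d-1}$ with the first generator $y_1$ of $\Z_p^{d-1}$ acting on $\Z_p = \langle x \rangle$ by $x \mapsto x^{(1-p^s)^{-1}}$ (so that $[x, y_1] = x^{-p^s}$ in $\Gamma$), and map $x_1$ to $x$ and each $x_i$ for $i \geq 2$ to the $(i-1)$-st generator of $\Z_p^{d-1}$. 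Then $\rho(r) = x^{p^s} \cdot x^{-p^s} \cdot 1 \cdots = 1$, so $\rho$ is well-defined. Since $\Gamma^{ab} \cong \Z/p^s \oplus \Z_p^{d-1}$, one has $\Gamma/\Phi(\Gamma) \cong \F_p^d$, so $d(\Gamma) = d = d(H)$. Setting $Q_n \defeq \ker \rho$, we get $H/Q_n \cong \Gamma \in \mathcal{C}$, and because $d(\Gamma) = d(H)$ the composition $H \twoheadrightarrow \Gamma \twoheadrightarrow \F_p^d$ is the Frattini quotient of $H$ (any surjection from $H$ onto $\F_p^{d(H)}$ has kernel $\Phi(H)$), whose kernel is $\Phi(H) = G_n$; so $Q_n \subseteq G_n$, as \propref{CCrit} requires.

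The main obstacle is a case-by-case verification across Labute's classification of Demushkin relators: for $q(H) = 0$ (the surface pro-$p$ case) the abelian $\Gamma = \Z_p^d$ already works, while for $p = 2$ the relator $r$ carries additional power terms and $\Gamma$ must be chosen as a suitably twisted iterated semidirect product of $\Z_p$'s absorbing them, all while preserving torsion-freeness, poly-procyclicity, and the equality $d(\Gamma) = d$. Once $\rho$ is exhibited in every case, the hypotheses of \propref{CCrit} are verified, and $G$ is residually $\mathcal{C}$.
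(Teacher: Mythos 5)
Your skeleton is exactly the paper's route: take the Frattini series $G_n = \Phi(G_{n-1})$, apply \propref{CCrit}, and for each $n$ map the torsion-free Demushkin group $H = G_{n-1}$ onto a torsion-free poly-procyclic group $\Gamma$ (a semidirect product built from copies of $\Z_p$) with kernel $Q_n$ inside $G_n = \Phi(H)$. Your device for getting $Q_n \subseteq \Phi(H)$ -- arranging $d(\Gamma) = d(H)$ so that $H \twoheadrightarrow \Gamma \twoheadrightarrow \F_p^{d}$ has kernel $\Phi(H)$ -- is a perfectly good variant of the paper's observation that its kernel is generated by commutators, and your generic-case construction (odd $p$, $q = p^s \neq 0$, with $x_2$ acting on $\langle x_1\rangle$ by the unit $1-p^s$, and the $q=0$ case via $\Z_p^d$) is correct: the quotient is an extension of torsion-free groups, hence torsion-free, and poly-procyclic.

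However, as written the proof has a genuine gap, and it sits exactly where the real work lies: you explicitly defer the verification for the remaining relator types in Labute's classification, in particular the $p=2$ families where the relator carries power terms on two or three generators and the surviving commutator may be $[x_2,x_3]$ rather than $[x_1,x_2]$. For the argument to close, one must check in every case that the conjugating generator can be taken with exponent $0$ in the relator and its partner with exponent in $p\Z_p$, so that the induced action on the free abelian part is unipotent mod $p$ (hence an actual automorphism of $\Z_p^{d-1}$), the quotient stays torsion-free poly-procyclic, and $d(\Gamma) = d(H)$ still holds. The paper avoids a case-by-case treatment by citing Labute's Theorem 3, which normalizes the relator uniformly to $x_1^{\alpha_1}x_2^{\alpha_2}[x_{\ell-1},x_{\ell}]x_3^{\alpha_3}P$ with $\ell \in \{2,3\}$, $\alpha_{\ell}=0$, $\alpha_{\ell-1}\in p\Z_p$, and $P$ a product of the other commutators; killing the closed normal subgroup generated by those other commutators then visibly yields $\Z_p^{m-1}\rtimes\Z_p$ in all cases at once, with the kernel automatically inside $\Phi(H)$. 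Either import such a normalized presentation or actually carry out the $p=2$ cases; until then the proposal proves the corollary only for the generic odd-$p$ relators.
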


\begin{proof}

Let $G_n$ be the Frattini series of $G$, given by
\begin{equation}
G_0 \defeq G, \quad G_n \defeq \Phi(G_{n-1}) = G_{n-1}^p[G_{n-1},G_{n-1}], \quad n \geq 1.
\end{equation}
This is a chain of open normal subgroups of $G$ that intersects trivially.

Fix $n \geq 1$, put $\Gamma \defeq G_{n-1}$, and recall that $\Gamma$ is a torsion-free Demushkin group.
It follows from the classification given in \cite[Theorem 3]{Lab} that
\begin{equation}
\Gamma = 
\langle 
x_1, x_2, \dots, x_m \ | \ x_1^{\alpha_1} x_2^{\alpha_2} [x_{\ell-1}, x_{\ell}]  x_3^{\alpha_3}  P = 1  
\rangle
\end{equation}
as a pro-$p$ group. Here,
\begin{equation}
m \geq 2,\ell, \quad \alpha_1, \alpha_2, \alpha_3 \in \mathbb{Z}_p, 
\quad \ell \in \{2,3\}, \quad \alpha_\ell = 0, \quad \alpha_{\ell-1} \in p\mathbb{Z}_p,
\end{equation}
($\alpha_3 = 0$ if $m = 2$), and $P$ is a product of elements from the set
\begin{equation}
S \defeq \big \{[x_i,x_j] \ | \ 1 \leq i < j \leq m \big \} \setminus \big \{[x_{\ell-1}, x_{\ell}] \big \}.
\end{equation}

Let $Q$ be the closed normal subgroup of $\Gamma$ generated by $S$.
Evidently, $Q$ is contained in $\Phi(\Gamma) = G_n$.
Rewriting the relations slightly, we find that the group $\Gamma/Q$ has a presentation with generators $x_1, \dots, x_m$
and relations
\begin{itemize}

\item $\forall \ i,j \neq \ell \quad [x_i, x_j] = 1;$

\item $\forall \ k \neq \ell-1 \quad x_\ell^{-1} \cdot x_k \cdot x_\ell = x_k;$

\item $x_\ell^{-1} \cdot x_{\ell-1} \cdot x_\ell = x_{\ell-1}x_2^{-\alpha_2}x_1^{-\alpha_1}x_3^{-\alpha_3}.$

\end{itemize}

Since $\alpha_\ell = 0$ and $\alpha_{\ell-1} \in p\mathbb{Z}_p$, the relations above imply that conjugation by $x_\ell$ induces a unipotent endomorphism of the mod $p$ reduction of
\begin{equation}
L \defeq \langle x_i \ | \ 1 \leq i \leq m, \ i \neq \ell \rangle.
\end{equation}
It follows that conjugation by $x_\ell$ is a pro-$p$ automorphism of $L$, that is
\begin{equation}
\Gamma/Q \cong  L \rtimes \langle x_\ell \rangle \cong
\mathbb{Z}_p^{m-1} \rtimes \mathbb{Z}_p \in \mathcal{C}.
\end{equation}
Hence, our corollary follows from \propref{CCrit}.
\end{proof}

In particular, nonsolvable Demushkin groups are residually torsion-free $p$-adic analytic.
For other pro-$p$ groups that are residually torsion-free poly-procyclic see \cite[Theorem 4.2]{KZ2}.

\subsection{The Atiyah conjecture}

It is convenient for us to state the Atiyah conjecture using a variant of the relation gradient.
\begin{defi}

For a pro-$p$ group $G$ and a finitely generated $\F_p \llb G \rrb$-module $M$, we define the rank gradient of $M$ over $G$ to be 
\begin{equation}
\beta_0^G(M) \defeq \inf_{H \leq_o G} \frac{\dim_{\F_p} H_0(H,M)}{[G : H]}.
\end{equation}

\end{defi}
The rank gradient behaves in a manner similar to the relation gradient, and in particular, 
\lemref{FolkLem} holds for it.

The Atiyah conjecture states that the rank gradient is an integer once $G$ belongs to the class of torsion-free pro-$p$ groups. For other forms of the conjecture see \cite{J2}.
We are interested in this conjecture in light of the following.

\begin{prop} \label{ArgProp}

Let $G$ be torsion-free pro-$p$ group for which the Atiyah conjecture holds.
Then the relation gradient of any finitely presented $\F_p \llb G \rrb$-module $M$ is an integer.

\end{prop}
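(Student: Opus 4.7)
The plan is to exploit the finite presentation of $M$ to express $\beta_1^G(M)$ as an integer combination of rank gradients of finitely generated modules, to which the Atiyah conjecture then applies directly. Since $M$ is finitely presented, there is a short exact sequence
\begin{equation*}
0 \to L \to \F_p \llb G \rrb^d \to M \to 0
\end{equation*}
with $L$ a finitely generated $\F_p \llb G \rrb$-module. For any open subgroup $H \leq_o G$, decomposing $G$ into left $H$-cosets shows that $\F_p \llb G \rrb$ is a free left $\F_p \llb H \rrb$-module of rank $[G:H]$. Hence $H_i(H, \F_p \llb G \rrb^d) = 0$ for every $i \geq 1$ and $\dim_{\F_p} H_0(H, \F_p \llb G \rrb^d) = d[G:H]$, so the long exact homology sequence attached to the sequence above collapses to
\begin{equation*}
0 \to H_1(H, M) \to H_0(H, L) \to \F_p^{d[G:H]} \to H_0(H, M) \to 0.
\end{equation*}
Taking $\F_p$-dimensions, dividing by $[G:H]$, and rearranging yields the pointwise identity
\begin{equation*}
\frac{\dim_{\F_p} H_1(H, M)}{[G:H]} = \frac{\dim_{\F_p} H_0(H, L)}{[G:H]} + \frac{\dim_{\F_p} H_0(H, M)}{[G:H]} - d.
\end{equation*}

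The next step is to upgrade this pointwise identity to an identity of infima. Given $\epsilon > 0$, I would select open subgroups of $G$ witnessing each of $\beta_1^G(M)$, $\beta_0^G(L)$, and $\beta_0^G(M)$ up to $\epsilon$, and then pass to their intersection; by \lemref{FolkLem} this common refinement witnesses all three infima up to $\epsilon$ simultaneously. Feeding it into the pointwise identity and letting $\epsilon \to 0$ (exactly as in the proof of \propref{SaProp}) gives
\begin{equation*}
\beta_1^G(M) = \beta_0^G(L) + \beta_0^G(M) - d.
\end{equation*}

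To conclude, since $L$ and $M$ are finitely generated $\F_p \llb G \rrb$-modules and $G$ is a torsion-free pro-$p$ group satisfying the Atiyah conjecture, the rank gradients $\beta_0^G(L)$ and $\beta_0^G(M)$ are both integers. Combined with $d \in \Z$, this forces $\beta_1^G(M) \in \Z$. The one mildly delicate step is passing from a pointwise equality to an equality of infima, which requires simultaneously controlling three different infima along a common open subgroup; this is handled by the common-refinement trick already used in \propref{SaProp}, so no genuinely new difficulty arises.
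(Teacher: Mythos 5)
Your proposal is correct and follows essentially the same route as the paper: write $M$ as a quotient of $\F_p \llb G \rrb^d$ by a finitely generated submodule, use the long exact sequence to get the pointwise identity relating $H_1(H,M)$ to $H_0(H,L)$ and $H_0(H,M)$, pass to infima to obtain $\beta_1^G(M) = \beta_0^G(L) + \beta_0^G(M) - d$, and invoke the Atiyah conjecture. The only difference is that you spell out the common-refinement argument for turning the pointwise identity into an identity of infima, a step the paper compresses into ``take the infimum over all $H \leq_o G$''; your justification via \lemref{FolkLem} is exactly what is needed there.
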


\begin{proof}

As $M$ is finitely presented, there exists a short exact sequence
\begin{equation}
0 \to K \to \F_p \llb G \rrb^d \to M \to 0
\end{equation}
of $\F_p \llb G \rrb$-modules with $d \in \mathbb{N}$, and $K$ finitely generated.
For any $H \leq_o G$,
considering the associated long exact sequence we see that
\begin{equation}
\frac{\dim_{\F_p} H_1(H, M)}{[G : H]} = 
\frac{\dim_{\F_p} H_0(H, K)}{[G : H]} - d + \frac{\dim_{\F_p} H_0(H, M)}{[G : H]}.
\end{equation}
At last, take the infimum over all $H \leq_o G$ and use the Atiyah conjecture.
\end{proof}

For the class $\mathcal{C}$ of torsion-free $p$-adic analytic groups, a proof of the Atiyah conjecture (based on ideas by Lazard, Harris, and Farkas-Linnell) is given in \cite[Theorem 2.1]{BLLS}.
From that, we deduce the following.
\begin{cor}

A finitely generated residually $\mathcal{C}$ pro-$p$ group $G$ satisfies the Atiyah conjecture.

\end{cor}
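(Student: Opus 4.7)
The plan is to reduce the Atiyah conjecture for $G$ to the case of torsion-free $p$-adic analytic groups, where it is already known by \cite[Theorem 2.1]{BLLS}. Since every member of $\mathcal{C}$ is torsion-free and $G$ is residually $\mathcal{C}$, the group $G$ is itself torsion-free, so the hypothesis of the conjecture is met. Fix a descending chain $\{\Omega_n\}_{n \geq 1}$ of open normal subgroups of $G$ with $\bigcap_n \Omega_n = 1$ and $G/\Omega_n \in \mathcal{C}$ for every $n$, and let $M$ be a finitely generated $\F_p \llb G \rrb$-module whose rank gradient I want to show is an integer.

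The core is a cofinality-plus-base-change argument. A standard compactness argument in the profinite group $G$ (using that the $\Omega_n$ form a descending chain of closed subgroups whose intersection lies in any prescribed open neighborhood of $1$) shows that every open subgroup of $G$ contains some $\Omega_n$, so the open subgroups $H$ with $H \supseteq \Omega_n$ for some $n$ are cofinal. The rank-gradient analogue of \lemref{FolkLem} then gives
\begin{equation*}
\beta_0^G(M) \;=\; \inf_n \; \inf_{\Omega_n \subseteq H \leq_o G} \frac{\dim_{\F_p} H_0(H, M)}{[G:H]}.
\end{equation*}
Setting $N_n \defeq H_0(\Omega_n, M)$, which is a finitely generated $\F_p \llb G/\Omega_n \rrb$-module (being a quotient of $M$ on which $\Omega_n$ acts trivially), the associativity of the coinvariants functor yields the base-change identity $H_0(H, M) \cong H_0(H/\Omega_n, N_n)$ for any $H \supseteq \Omega_n$. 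Because $[G:H] = [G/\Omega_n : H/\Omega_n]$, the inner infimum is exactly $\beta_0^{G/\Omega_n}(N_n)$, so
\begin{equation*}
\beta_0^G(M) \;=\; \inf_n \, \beta_0^{G/\Omega_n}(N_n).
\end{equation*}

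By \cite[Theorem 2.1]{BLLS}, each group $G/\Omega_n \in \mathcal{C}$ satisfies the Atiyah conjecture, so every $\beta_0^{G/\Omega_n}(N_n)$ is a nonnegative integer. The infimum of a nonempty set of nonnegative integers is itself a nonnegative integer (indeed the minimum), hence $\beta_0^G(M) \in \mathbb{Z}$, as desired. The only points requiring care are the cofinality of the subgroups containing some $\Omega_n$ and the base-change identity for iterated coinvariants; both are standard, and I do not foresee a genuine obstacle beyond assembling these ingredients cleanly.
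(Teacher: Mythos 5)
Your argument is sound, but one hypothesis is misstated: the subgroups $\Omega_n$ witnessing residual $\mathcal{C}$-ness must be taken to be \emph{closed} normal subgroups of $G$, not open ones (this is how the paper defines residually $\mathcal{C}$). If $\Omega_n$ were open, then $G/\Omega_n$ would be a finite $p$-group, and the only finite torsion-free group is trivial, so $\Omega_n = G$ and a trivially intersecting chain could not exist for nontrivial $G$. Fortunately your proof never uses openness of the $\Omega_n$: the compactness step needs only that they form a descending chain of closed subgroups with trivial intersection, and the correspondence $H \leftrightarrow H/\Omega_n$ between open subgroups of $G$ containing $\Omega_n$ and open subgroups of $G/\Omega_n$, together with $[G:H]=[G/\Omega_n:H/\Omega_n]$ and $H_0(H,M)\cong H_0(H/\Omega_n, M_{\Omega_n})$, holds for closed normal $\Omega_n$. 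So replacing ``open'' by ``closed'' repairs the setup with no change to the argument.

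With that fix, your route is correct and is a genuine streamlining of the paper's proof. Both arguments rest on the same two ingredients: the compactness fact that every open subgroup of $G$ contains some $\Omega_n$ (the paper uses it implicitly when choosing $k(n)$ with $\Omega_{k}\subseteq H_{n-1}\cap G_n$), and the base-change identity for coinvariants, which the paper likewise invokes to rewrite $\dim_{\F_p} H_0(H_n/\Omega_k, M_{\Omega_k})$ as $\dim_{\F_p} H_0(H_n,M)$. The difference is organizational: the paper constructs an auxiliary cofinal chain $\{H_n\}$ interleaved with a trivially intersecting sequence of open subgroups, arranges at stage $n$ a subgroup within $1/n$ of $\beta_0^{G/\Omega_{k(n)}}(M_{\Omega_{k(n)}})$, and concludes by a limiting argument that $\beta_0^G(M)$ is arbitrarily close to integers; you instead split the defining infimum according to which $\Omega_n$ a given open subgroup contains, obtaining the exact identity $\beta_0^G(M)=\inf_n \beta_0^{G/\Omega_n}(M_{\Omega_n})$, and finish by noting that an infimum of nonnegative integers is an integer. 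Your formulation avoids the auxiliary chain and the $\epsilon$-limit bookkeeping and makes the reduction to \cite[Theorem 2.1]{BLLS} transparent; the paper's version proves nothing more, it merely packages the same reduction as an approximation along one cofinal chain.
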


\begin{proof}

Let $M$ be a finitely generated $\F_p \llb G \rrb$-module, let $\{G_n\}_{n = 0}^\infty$ be a trivially intersecting sequence of open subgroups of $G$, and let $\{\Omega_k\}_{k = 0}^\infty$ be a trivially intersecting chain of normal subgroups of $G$ with $G/\Omega_k \in \mathcal{C}$.

We inductively construct a chain $\{H_n\}_{n = 0}^\infty$ of open subgroups of $G$.
First set $H_0 = G_0$ and suppose that $H_i$ has already been defined for $i < n$.
Pick an integer $k = k(n)$ such that $\Omega_{k} \subseteq H_{n-1} \cap G_n$, and choose $H_n$ to be an open subgroup of 
$H_{n-1} \cap G_n$ that contains $\Omega_k$ and satisfies
\begin{equation}
\Bigg|  \frac{\dim_{\F_p} H_0(H_n/\Omega_k, M_{\Omega_k})}{[G/\Omega_k : H_n/\Omega_k]} -  \beta_0^{G/\Omega_{k}}(M_{\Omega_{k}}) \Bigg| \leq \frac{1}{n}.
\end{equation}
As the Atiyah conjecture for $G/\Omega_k$ is true, the inequality above reduces to
\begin{equation} \label{SimpEq}
\Bigg|  \frac{\dim_{\F_p} H_0(H_n, M)}{[G : H_n]} -  z_k \Bigg| \leq \frac{1}{n}
\end{equation}
for some $z_k \in \mathbb{Z}$.
It follows from our construction of the chain that
\begin{equation}
\bigcap_{n=0}^\infty H_n \subseteq \bigcap_{n=0}^\infty G_n = 1
\end{equation}
so the chain is cofinal, and thus (by \lemref{FolkLem}) we have
\begin{equation}
\beta_0^G(M) = \inf_{n \geq 0} \frac{\dim_{\F_p} H_0(H_n, M)}{[G : H_n]} = 
\lim_{n \to \infty} \frac{\dim_{\F_p} H_0(H_n, M)}{[G : H_n]}
\end{equation} 
which is arbitrarily close to an integer, by inequality \eqref{SimpEq}.
\end{proof}
In particular, nonsolvable Demushkin groups satisfy the Atiyah conjecture.
As a result, the values of $\beta_1$ on finitely presented modules are integral.

\section{One-relator modules}

Let $G$ be a pro-$p$ group.
We say that an $\F_p \llb G \rrb$-module $M$ is a one-relator module if it is a finitely generated $G$-module that satisfies
\begin{equation}
\dim_{\F_p} H_1(G, M) = 1.
\end{equation}
This is equivalent to the existence of a short exact sequence
\begin{equation} \label{OneRelEq}
0 \to C \to \F_p \llb G \rrb^d \to M \to 0
\end{equation}
where $d = \dim_{\F_p} H_0(G,M)$ and $C$ is a nontrivial cyclic $\F_p \llb G \rrb$-module.

We shall need a Schreier formula characterization of freeness.

\begin{lem} \label{SchreierLem}

Let $G$ be a pro-$p$ group, and let $M$ be a finitely generated $\F_p \llb G \rrb$-module.
Then $M$ is a free $\F_p \llb G \rrb$-module if and only if
\begin{equation}
\beta_0^G(M) = \dim_{\F_p} H_0(G,M).
\end{equation}

\end{lem}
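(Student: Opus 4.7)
The forward direction is immediate: if $M \cong \F_p\llb G\rrb^d$, then since $\F_p\llb G\rrb$ is a free $\F_p\llb H\rrb$-module of rank $[G:H]$ for every open $H \leq_o G$, we have $\dim_{\F_p} H_0(H, M) = d[G:H]$, so the ratio $\dim_{\F_p} H_0(H, M)/[G:H]$ equals $d$ independently of $H$, whence $\beta_0^G(M) = d = \dim_{\F_p} H_0(G, M)$.

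For the reverse direction, set $d \defeq \dim_{\F_p} H_0(G, M)$. By the pro-$p$ Nakayama lemma, $d$ is the minimal number of topological generators of $M$, so we can fix a minimal presentation
$$0 \to K \to F \to M \to 0, \qquad F \defeq \F_p\llb G\rrb^d.$$
For any open $H \leq_o G$, the module $F$ is $\F_p\llb H\rrb$-free (of rank $d[G:H]$), so $H_1(H, F) = 0$, and the homological long exact sequence collapses to
$$0 \to H_1(H, M) \to H_0(H, K) \to H_0(H, F) \to H_0(H, M) \to 0,$$
with $\dim_{\F_p} H_0(H, F) = d[G:H]$.

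The main step is to observe that $\beta_0^G(M) = d$, being an infimum, gives the pointwise lower bound $\dim_{\F_p} H_0(H, M) \geq d[G:H]$ for every open $H \leq_o G$, while the surjection $F \twoheadrightarrow M$ forces the opposite inequality. Consequently $H_0(H, F) \to H_0(H, M)$ is a surjection between $\F_p$-vector spaces of equal dimension, hence an isomorphism, so by exactness the map $H_0(H, K) \to H_0(H, F)$ is zero. Interpreted in $F$, this says that $K \subseteq I_H F$, where $I_H F$ denotes the kernel of the canonical projection $F \twoheadrightarrow H_0(H, F)$.

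To conclude, let $H$ range over open normal subgroups $N \lhd_o G$. Then $F/I_N F \cong \F_p[G/N]^d$, and the family $\{I_N F\}$ is a neighborhood basis of $0$ in the pro-$p$ topology of $F$, so $\bigcap_{N \lhd_o G} I_N F = 0$. The previous paragraph therefore yields $K \subseteq \bigcap_N I_N F = 0$, and $M \cong F$ is free of rank $d$, as required. The only mildly delicate points are the correct interpretation of the infimum defining $\beta_0^G$ as a pointwise lower bound on each ratio, and the standard topological fact that $\bigcap_N I_N F = 0$ for a finitely generated pro-$p$ module $F$; neither is expected to pose any serious obstacle.
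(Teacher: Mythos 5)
Your proof is correct and follows essentially the same route as the paper: after reducing to the pointwise equality $\dim_{\F_p} H_0(H,M) = [G:H]\dim_{\F_p} H_0(G,M)$, both arguments take a presentation $0 \to K \to \F_p\llb G\rrb^d \to M \to 0$ with $d = \dim_{\F_p} H_0(G,M)$, deduce that $H_0(H,K) \to H_0(H,\F_p\llb G\rrb^d)$ vanishes for all open (normal) $H$, and conclude $K = 0$ from the trivial intersection of the kernels $\ker\big(\F_p\llb G\rrb^d \to \F_p[G/H]^d\big)$. The only cosmetic difference is that you obtain the upper bound $\dim_{\F_p} H_0(H,M) \leq d[G:H]$ from the surjection off the free module rather than from the monotonicity lemma, which changes nothing of substance.
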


\begin{proof}

By definition of the rank gradient, we need to show that the equality
\begin{equation} \label{ModuleSchreierEq}
\dim_{\F_p} H_0(K,M) = [G : K] \dim_{\F_p} H_0(G,M)  
\end{equation}
holds for every open subgroup $K$ of $G$, if and only if $M$ is free.
Indeed, if $M$ is a free $\F_p \llb G \rrb$-module of rank $d \defeq \dim_{\F_p} H_0(G,M)$,
then as an $\F_p \llb K \rrb$-module,
$M$ is a direct sum of $[G : K]$ copies of a free $\F_p \llb K \rrb$-module of rank $d$.
Hence, equation \eqref{ModuleSchreierEq} holds in this case.

For the other direction, write an exact sequence of $\F_p \llb G \rrb$-modules
\begin{equation} \label{ExSecForM}
0 \to N \to \F_p \llb G \rrb^d \to M \to 0
\end{equation}
where (as previously) $d = \dim_{\F_p} H_0(G,M)$.
Equation \eqref{ModuleSchreierEq} implies that
\begin{equation}
H_0 \big( K, \F_p \llb G \rrb^d \big) \to H_0 \big(K, M \big)
\end{equation}
is an injection for any $K \lhd_o G$, or equivalently, that the map
\begin{equation}
H_0 \big( K, N \big) \to H_0 \big( K, \F_p \llb G \rrb^d \big) \cong \F_p [G/K]^d
\end{equation}
is zero.
We conclude that $N$ is contained in the kernel of the map
\begin{equation}
\F_p \llb G \rrb^d \to \F_p [G/K]^d
\end{equation}
for every $K \lhd_o G$.
The intersection of these kernels is trivial, so $N = 0$, and thus from equation \eqref{ExSecForM} we infer that $M \cong \F_p \llb G \rrb^d$ as required.
\end{proof}

The vanishing results in the next section are based on the following.

\begin{cor} \label{NoBeta1For1Relator}

Let $G$ be a torsion-free pro-$p$ group that satisfies Atiyah's conjecture,
and let $M$ be a one-relator $\F_p \llb G \rrb$-module.
Then $\beta_1^G(M) = 0$.

\end{cor}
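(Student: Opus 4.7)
The plan is to exploit the defining short exact sequence
\begin{equation*}
0 \to C \to \F_p \llb G \rrb^d \to M \to 0
\end{equation*}
with $d = \dim_{\F_p} H_0(G,M)$ and $C$ a nonzero cyclic $\F_p \llb G \rrb$-module, given by equation \eqref{OneRelEq}. Since $C$ is cyclic it is finitely generated, so $M$ is finitely presented, and \propref{ArgProp} applies: $\beta_1^G(M)$ is a nonnegative integer. The rest of the argument reduces to bounding it strictly below $1$.

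For each open $H \leq G$, restriction makes $\F_p \llb G \rrb^d$ free over $\F_p \llb H \rrb$, so $H_1(H, \F_p \llb G \rrb^d) = 0$. The associated long exact sequence collapses to the exact equality
\begin{equation*}
\dim_{\F_p} H_1(H, M) = \dim_{\F_p} H_0(H, C) + \dim_{\F_p} H_0(H, M) - d[G:H].
\end{equation*}
Dividing by $[G:H]$ and evaluating along a descending chain of open normal subgroups intersecting trivially (cofinal, with all three normalized dimensions converging to their infima by \lemref{FolkLem}), one obtains the clean gradient identity
\begin{equation*}
\beta_1^G(M) = \beta_0^G(C) + \beta_0^G(M) - d.
\end{equation*}

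Next I bound both rank gradients. Taking $H = G$ gives $\beta_0^G(M) \leq \dim_{\F_p} H_0(G,M) = d$. Since $C$ is a cyclic quotient of $\F_p \llb G \rrb$, the coinvariants $H_0(H,C)$ are a quotient of $\F_p[G/H]$, so $\beta_0^G(C) \leq 1$. Combining gives $\beta_1^G(M) \leq 1$. As $G$ satisfies the Atiyah conjecture, both $\beta_0^G(C)$ and $\beta_0^G(M)$ are integers, hence $\beta_1^G(M) \in \{0, 1\}$.

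The main (and only nontrivial) step is to rule out $\beta_1^G(M) = 1$. The key observation is that this would force the two bounds to be saturated simultaneously, namely $\beta_0^G(C) = 1$ and $\beta_0^G(M) = d$. By the Schreier-type criterion \lemref{SchreierLem}, the first equality forces $C \cong \F_p \llb G \rrb$ and the second forces $M \cong \F_p \llb G \rrb^d$. Freeness of $M$ makes our exact sequence split, yielding
\begin{equation*}
\F_p \llb G \rrb^d \cong C \oplus M \cong \F_p \llb G \rrb^{d+1},
\end{equation*}
and applying $H_0(G, -)$ to both sides produces $\F_p^d \cong \F_p^{d+1}$, a contradiction. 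Hence $\beta_1^G(M) = 0$, as desired. The subtle point in this plan is the joint cofinality argument used to promote the pointwise homological equation to an identity of gradients; everything else is bookkeeping once Atiyah and the Schreier characterization are in hand.
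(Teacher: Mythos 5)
Your proposal is correct and takes essentially the same route as the paper: the same presentation $0 \to C \to \F_p \llb G \rrb^d \to M \to 0$, the same gradient identity $\beta_1^G(M) = \beta_0^G(C) + \beta_0^G(M) - d$, and the same combination of \lemref{SchreierLem} with Atiyah integrality. The only difference is the endgame: the paper notes directly that $M$ is not free (as $H_1(G,M) \neq 0$), so integrality gives $\beta_0^G(M) \leq d-1$ and hence $\beta_1^G(M) \leq \beta_0^G(C) - 1 \leq 0$, whereas you rule out the saturated case $\beta_1^G(M) = 1$ by forcing $C$ and $M$ free and deriving a rank contradiction --- equivalent bookkeeping.
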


\begin{proof}

As $M$ is a one-relator module, we have a short exact sequence
\begin{equation} 
0 \to C \to \F_p \llb G \rrb^d \to M \to 0
\end{equation}
of $\F_p \llb G \rrb$-modules with
\begin{equation} \label{StuEq}
\dim_{\F_p} H_0(G, C) = 1, \quad \dim_{\F_p} H_0(G, M) = d.
\end{equation}

Arguing as in the proof of \propref{ArgProp}, we find that our short exact sequence gives the equality
\begin{equation} \label{B1AsB0Eq}
\beta_1^G(M) = \beta_0^G(C) - d + \beta_0^G(M).
\end{equation}
Since $M$ is not a free $\F_p \llb G \rrb$-module, \lemref{SchreierLem} tells us that
\begin{equation}
\beta_0^G(M) < \dim_{\F_p} H_0(G, M).
\end{equation} 
As $G$ satisfies Atiyah's conjecture, $\beta_0^G(M)$ is an integer so the above becomes
\begin{equation}
\beta_0^G(M) \leq \dim_{\F_p} H_0(G, M) - 1.
\end{equation}
Combining this with equation \eqref{B1AsB0Eq}, and using equation \eqref{StuEq}, we get
\begin{equation}
\beta_1^G(M) \leq \beta_0^G(C) - d + \dim_{\F_p} H_0(G, M) - 1 = \beta_0^G(C) - 1 \leq 0
\end{equation}
where the last inequality holds since $\beta_0^G(C) \leq \dim_{\F_p} H_0(G, C) = 1$.
\end{proof}

\begin{cor}

Let $G$ be a torsion-free pro-$p$ group that satisfies Atiyah's conjecture,
and let $a,b \in \F_p \llb G \rrb$ be nonzero. Then $ab \neq 0$.

\end{cor}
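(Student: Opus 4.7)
The plan is to argue by contradiction: assume $ab=0$ with $a,b$ nonzero, set $R\defeq\F_p\llb G\rrb$, and apply \corref{NoBeta1For1Relator} to the cyclic module $M\defeq R/Rb$. Throughout I exploit that $R$ is a pseudocompact local ring with residue field $\F_p$ (as an inverse limit of the finite local rings $\F_p[G/N]$), so that $1+I\subseteq R^\times$ for the augmentation ideal $I$; hence any element of $R$ with a one-sided inverse is a two-sided unit, and Nakayama's lemma holds for finitely generated pseudocompact $R$-modules.

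First I would verify that $M$ is genuinely a one-relator module. If $Rb=R$ then $b$ has a left inverse and is thus a unit by locality, so $ab=0$ forces $a=0$, contradicting $a\neq 0$; hence $0\neq Rb\subsetneq R$ and Nakayama gives $\dim_{\F_p}H_0(G,Rb)=\dim_{\F_p}H_0(G,M)=1$. Therefore \corref{NoBeta1For1Relator} applies and yields $\beta_1^G(M)=0$. Imitating the computation from the proof of \propref{ArgProp} applied to the short exact sequence $0\to Rb\to R\to M\to 0$ (and using that $R$ is a free $\F_p\llb H\rrb$-module of rank $[G:H]$ for every $H\leq_o G$, so that $H_1(H,R)=0$ and $\beta_0^G(R)=1$), dividing the long exact sequence by $[G:H]$ and passing to the infimum along a cofinal descending chain via \lemref{FolkLem} produces
\begin{equation*}
\beta_0^G(Rb)+\beta_0^G(M)=\beta_1^G(M)+1=1.
\end{equation*}
By the Atiyah conjecture both summands are nonnegative integers, and each is bounded by $1$ because $Rb$ and $M$ are cyclic. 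Hence exactly one of them equals $1$, and \lemref{SchreierLem} then identifies the corresponding module as free of rank $1$ over $R$.

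The main obstacle is the final step, a rigidity lemma: if a cyclic free $R$-module of rank $1$ is written in the form $Rv$, then the natural surjection $R\to Rv$ sending $r$ to $rv$ is already an isomorphism. I would prove this by picking any free generator $w$ of the module, writing $v=sw$ and $w=tv$ to obtain $(ts-1)w=0$ and hence $ts=1$ by freeness of $w$, then invoking locality of $R$ to upgrade the left inverse $t$ of $s$ to a two-sided inverse; right multiplication by $s$ then factors $r\mapsto rv$ as a composition of isomorphisms $R\to R\to Rv$. Applying this lemma finishes the proof in both cases. If $Rb$ is free of rank $1$, then $r\mapsto rb$ is an isomorphism $R\to Rb$, so its kernel $\{r\in R:rb=0\}$ is trivial, yet contains the nonzero element $a$, a contradiction. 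If instead $M$ is free of rank $1$, then the projection $R\to M$ is an isomorphism, forcing its kernel $Rb$ to vanish and thereby contradicting $b\neq 0$.
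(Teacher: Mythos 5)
Your proposal is correct and takes essentially the same route as the paper: the same short exact sequence $0 \to \F_p\llb G\rrb b \to \F_p\llb G\rrb \to M \to 0$, the same identity $\beta_0^G(\F_p\llb G\rrb b)+\beta_0^G(M)=\beta_1^G(M)+1$, Atiyah integrality, and \lemref{SchreierLem} to force one of the two cyclic modules to be free of rank one. The only difference is presentational: the paper rules out the case of $M$ free outright (it is a one-relator module, hence not free) and leaves the passage from freeness of $\F_p\llb G\rrb b$ to triviality of the annihilator of $b$ implicit, whereas you spell out both points via your rigidity lemma.
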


\begin{proof}

Our statement is obvious if $b$ is a unit, so we assume that this is not the case.
Let $C$ be the cyclic submodule of $\F_p \llb G \rrb$ generated by $b$, 
and let $M$ be the one-relator $G$-module $\F_p \llb G \rrb/C$.
Equation \eqref{B1AsB0Eq} (with $d = 1$) reads
\begin{equation} \label{CGradEq}
\beta_0^G(C) = \beta_1^G(M) + 1 - \beta_0^G(M) \geq 1 - \beta_0^G(M).
\end{equation}
The $\F_p \llb G \rrb$-module $M$ is not free, so by \lemref{SchreierLem} we know that
\begin{equation}
\beta_0^G(M) < \dim_{\F_p} H_0(G,M) = 1.
\end{equation}
Moreover, $\beta_0^G(M)$ is an integer as $G$ satisfies Atiyah's conjecture.
We conclude that  $\beta_0^G(M) = 0$ so equation \eqref{CGradEq} says that 
\begin{equation}
\beta_0^G(C) \geq 1 = \dim_{\F_p} H_0(G, C).
\end{equation}
Invoking \lemref{SchreierLem} once again, we get that $C$ is a free $\F_p \llb G \rrb$-module.
In other words, the annihilator of $b$ is trivial, so $ab \neq 0$. 
\end{proof}

\section{Vanishing}

Our goal here is to establish,
for a finitely generated subgroup $U$ of a Demushkin group $G$, 
the vanishing of the relation gradient for an open submodule of $\F_p \llb G/U \rrb$ with codimension as small as possible.

We begin with a quite general vanishing lemma.

\begin{lem} \label{VirtuallyFreeLem}

Let $G$ be a pro-$p$ group, let $U$ be a free pro-$p$ subgroup of $G$, and let $M$ be an $\F_p \llb G \rrb$-module that is finitely related over $U$.
Then there exists an open $G$-submodule $M_0$ of $M$ such that $\beta_1^U(M_0) = 0$.

\end{lem}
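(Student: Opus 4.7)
The plan is to produce an open $G$-submodule $M_0 \subseteq M$ with $H_1(U, M_0) = 0$, from which $\beta_1^U(M_0) = 0$ follows immediately via \lemref{FolkLem}: taking $n = 1$ and $H = U$ there yields $\dim_{\F_p} H_1(K, M_0)/[U:K] \leq \dim_{\F_p} H_1(U, M_0) = 0$ for every $K \leq_o U$.

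Since $U$ is free pro-$p$, it has cohomological dimension at most $1$, and so $H_2(U, M/M_0) = 0$ for every $G$-submodule $M_0 \subseteq M$. The long exact sequence attached to $0 \to M_0 \to M \to M/M_0 \to 0$ identifies $H_1(U, M_0)$ with the subspace
$K_{M_0} \defeq \Ker\bigl(H_1(U, M) \to H_1(U, M/M_0)\bigr)$
of the finite-dimensional $\F_p$-vector space $V \defeq H_1(U, M)$. The family $\mathcal{F}$ of open $G$-submodules of $M$ is closed under finite intersection (since $M$ is profinite), and the factorization $M \to M/(M_0 \cap M_0') \to M/M_0$ gives $K_{M_0 \cap M_0'} \subseteq K_{M_0} \cap K_{M_0'}$; hence $\{K_{M_0}\}_{M_0 \in \mathcal{F}}$ is a downward-directed family of subspaces of $V$. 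A downward-directed family of subspaces of a finite-dimensional vector space whose intersection is zero must contain the zero subspace (choose a member of minimal dimension; if it were nonzero, picking one of its nonzero vectors and a family member omitting it would, by directedness, produce a strictly smaller member, a contradiction), so it suffices to verify $\bigcap_{M_0 \in \mathcal{F}} K_{M_0} = 0$.

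For this I would invoke the length-one projective resolution
$0 \to \F_p \llb U \rrb^X \to \F_p \llb U \rrb \to \F_p \to 0$
of the trivial $U$-module supplied by a topological basis $X$ of the free pro-$p$ group $U$, the first arrow sending the generator indexed by $x \in X$ to $x - 1$. Tensoring with any $G$-submodule $N \subseteq M$ realizes $H_1(U, N)$ as the kernel of the map $(n_x)_{x \in X} \mapsto \sum_x (x-1)n_x$ from $N^X$ to $N$; by naturality of this identification under the inclusion $N \hookrightarrow M$, we have $H_1(U, N) = V \cap N^X$ inside $M^X$. Setting $N = M_0$ gives $K_{M_0} = V \cap M_0^X$, and therefore
$\bigcap_{M_0 \in \mathcal{F}} K_{M_0} = V \cap \bigl(\bigcap_{M_0 \in \mathcal{F}} M_0\bigr)^X = 0,$
since $\bigcap_{M_0 \in \mathcal{F}} M_0 = 0$ by definition of the profinite topology on $M$. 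The main (mild) obstacle is that when $U$ has infinite rank, $M^X$ has to be interpreted as a completed direct product of profinite modules, but the coordinatewise intersection argument adapts without essential change.
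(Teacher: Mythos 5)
Your proof is correct, and its skeleton is the same as the paper's: find an open $G$-submodule $M_0$ for which $H_1(U,M)\to H_1(U,M/M_0)$ is injective, then use $\mathrm{cd}(U)\le 1$ (freeness of $U$) in the long exact sequence to force $H_1(U,M_0)=0$, hence $\beta_1^U(M_0)=0$. The only real divergence is how the injectivity is obtained: the paper simply cites continuity of profinite homology (\cite[Proposition 6.5.7]{RZ}) applied to the finite group $H_1(U,M)$, whereas you reprove this special case by hand, identifying $H_1(U,N)$ for $N\subseteq M$ with $\Ker\big(H_1(U,M)\to H_1(U,M/N)\big)$ via the length-one resolution $0\to I_U\to \F_p\llb U\rrb\to\F_p\to 0$ (augmentation ideal free on a basis of $U$) and then running a directed-family argument inside the finite-dimensional space $H_1(U,M)$; that argument, including the minimal-dimension trick, is sound. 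The one place to be careful is the infinite-rank case, which you flag: there $N^X$ should really be $I_U\ \widehat\otimes_{\F_p\llb U\rrb}\ N$ (a completed free module, not a direct product), and the cleanest way to finish is to note that $I_U$ is projective, so $I_U\ \widehat\otimes\ -$ is exact and commutes with inverse limits, giving $\bigcap_{M_0}\big(I_U\ \widehat\otimes\ M_0\big)=0$; with that phrasing your argument goes through verbatim (and in the paper's application $U$ is finitely generated anyway, so $X$ is finite). In short, the paper's route is shorter by outsourcing the key step to a standard continuity result, while yours is self-contained and makes explicit exactly where freeness of $U$ is used.
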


\begin{proof}

The group $H_1(U,M)$ is finite, 
so by \cite[Proposition 6.5.7]{RZ}, there exists an open $G$-submodule $M_0$ of $M$ such that the map
\begin{equation} \label{Inv0LimEq}
H_1 (U, M) \to H_1 (U, M/M_0)
\end{equation}
is injective. 
Consider the short exact sequence
\begin{equation}
0 \to M_0 \to M \to M/M_0 \to 0
\end{equation}
of $U$-modules.
For the associated long exact sequence, the aforementioned injectivity means that the connecting homomorphism
\begin{equation}
H_2 \big( U, M/M_0) \to H_1 (U, M_0)
\end{equation}
is surjective. 
Since $U$ is free, all second homology groups vanish, so we get
\begin{equation}
\beta_1^U(M_0) \leq \dim_{\F_p} H_1(U, M_0) \leq \dim_{\F_p} H_2(U, M/M_0) = 0
\end{equation}
as required.
\end{proof}

In the following proposition we obtain the vanishing of the relation gradient for a submodule of finite (but ineffective) codimension in $\F_p \llb G/U \rrb$.

\begin{prop} \label{IneffProp}

Let $G$ be a nonsolvable pro-$p$ Demushkin group, and let $U$ be a finitely generated subgroup of $G$.
Then the $G$-module $\F_p \llb G/U \rrb$ has an open $G$-submodule $M$ with $\beta_1^G(M) = 0$.

\end{prop}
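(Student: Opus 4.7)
The plan is to exhibit an open $G$-submodule $M$ of $\F_p \llb G/U \rrb$ that is in fact a one-relator $\F_p \llb G \rrb$-module, and then invoke \corref{NoBeta1For1Relator} (which applies because $G$ is torsion-free and satisfies the Atiyah conjecture, by the results of Section~4). The candidate will be
\begin{equation*}
M \defeq \ker \big( \F_p \llb G/U \rrb \twoheadrightarrow \F_p \llb G/UN \rrb \big)
\end{equation*}
for a carefully chosen open normal subgroup $N$ of $G$; this $M$ is open because $UN$ is open, and it is finitely presented by \propref{HFPprop} applied to $\F_p \llb G/U \rrb \cong \mathrm{Ind}_U^G \F_p$.

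Since $U$ has infinite index in $G$ it is free pro-$p$, and being finitely generated its Frattini subgroup $\Phi(U)$ is open in $U$. I would therefore choose $N \lhd_o G$ with $U \cap N \subseteq \Phi(U)$, which is possible since $\{U \cap N'\}_{N' \lhd_o G}$ is a neighborhood base of $1 \in U$. The point is that this choice forces the natural map $U/\Phi(U) \to UN/\Phi(UN)$ to be injective: one has $\Phi(U) \subseteq \Phi(UN)$ automatically, and reducing modulo $N$ via the isomorphism $U/(U\cap N) \cong UN/N$ identifies $\Phi(UN)\cdot N/N = \Phi(UN/N)$ with $\Phi(U)(U\cap N)/(U\cap N)$, so any $u \in U \cap \Phi(UN)$ lies in $\Phi(U)(U\cap N) \subseteq \Phi(U)$.

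To conclude, I would apply Shapiro's lemma to the long exact sequence in $G$-homology associated with $0 \to M \to \F_p \llb G/U \rrb \to \F_p \llb G/UN \rrb \to 0$, obtaining
\begin{equation*}
0 = H_2(U,\F_p) \to H_2(UN,\F_p) \to H_1(G,M) \to H_1(U,\F_p) \to H_1(UN,\F_p).
\end{equation*}
Here $H_2(U,\F_p) = 0$ by freeness of $U$, $H_2(UN,\F_p) \cong \F_p$ because $UN$ is an open (hence nonsolvable Demushkin) subgroup of $G$, and the rightmost map is the injection $U/\Phi(U) \hookrightarrow UN/\Phi(UN)$ from the previous step. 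Hence $H_1(G,M) \cong \F_p$, showing that $M$ is a one-relator $\F_p \llb G \rrb$-module, and \corref{NoBeta1For1Relator} yields $\beta_1^G(M) = 0$. The main obstacle is the delicate choice of $N$ guaranteeing the Frattini injectivity; without it the kernel of $H_1(U,\F_p) \to H_1(UN,\F_p)$ would absorb the $H_2(UN,\F_p) \cong \F_p$ contribution in the long exact sequence and $\dim H_1(G,M)$ could exceed $1$, destroying the one-relator structure.
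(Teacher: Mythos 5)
Your proposal is correct, and its skeleton coincides with the paper's: in both arguments $M$ is the kernel of the natural surjection $\F_p \llb G/U \rrb \twoheadrightarrow \F_p[G/H]$ for a suitable open subgroup $H \supseteq U$, the associated long exact sequence together with $\dim_{\F_p} H_2(H,\F_p)=1$ forces $\dim_{\F_p} H_1(G,M) \leq 1$, finite generation of $M$ comes from \propref{HFPprop}, and the vanishing is then \corref{NoBeta1For1Relator}. The genuine difference is how the key injectivity of $H_1(G,\F_p\llb G/U\rrb) \to H_1(G,\F_p[G/H])$ is secured: the paper obtains such an $H$ abstractly from the finiteness of $H_1(U,\F_p)$ by an inverse-limit argument (the same device, via \cite[Proposition 6.5.7]{RZ}, that drives \lemref{VirtuallyFreeLem}), whereas you take $H = UN$ with $N \lhd_o G$ chosen so that $U \cap N \subseteq \Phi(U)$ and verify by hand that $U \cap \Phi(UN) \subseteq \Phi(U)$, i.e.\ that $U/\Phi(U) \to UN/\Phi(UN)$ is injective; your Frattini computation ($\Phi(UN)N/N = \Phi(UN/N)$, transported through $U/(U\cap N) \cong UN/N$) is valid, so this is a correct and more explicit substitute for the paper's limit argument, at the cost of a word that should be said about why the $H_1$-map induced by $\F_p\llb G/U\rrb \to \F_p[G/UN]$ is, under Shapiro's isomorphism, precisely the corestriction $U/\Phi(U) \to UN/\Phi(UN)$. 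One minor remark: your exact identification $H_1(G,M) \cong \F_p$ uses $H_2(U,\F_p)=0$, hence the freeness of $U$ coming from the standing assumption $[G:U]=\infty$; the paper's proof instead allows $\dim_{\F_p} H_1(G,M)=0$ and disposes of that case by the free-module alternative, which is what you would also need if $U$ were open.
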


\begin{proof}

Let $H$ be an open subgroup of $G$ containing $U$ such that the  map
\begin{equation} \label{InvLimEq}
H_1 \big( G, \F_p \llb G/U \rrb \big) \to H_1 \big( G, \F_p [G/H] \big)
\end{equation}
is injective, and let $M$ be the (unique) $G$-submodule of $\F_p \llb G/U \rrb$ that fits into the short exact sequence
\begin{equation}
0 \to M \to \F_p \llb G/U \rrb \to \F_p [G/H] \to 0.
\end{equation}
Considering the associated long exact sequence, injectivity in equation \eqref{InvLimEq} implies that the connecting homomorphism
\begin{equation}
H_2 \big( G, \F_p [G/H] \big) \to H_1 \big( G, M \big)
\end{equation}
is surjective. Hence, using Shapiro's lemma, we find that
\begin{equation} \label{OneRelatorEq}
\dim_{\F_p} H_1 \big( G, M \big) \leq \dim_{\F_p} H_2 \big( G, \F_p [G/H] \big) = 
\dim_{\F_p} H_2 \big( H, \F_p \big) = 1
\end{equation}
where the last equality comes from  the fact that $H$ is a Demushkin group.

If $M$ is a free $\F_p \llb G \rrb$-module then clearly $\beta_1^G(M) = 0$, so let us assume that this is not the case.
Equation \eqref{OneRelatorEq} then tells us that $\dim_{\F_p} H_1(G,M)= 1$,
and from \propref{HFPprop} (using the finite generation of $U$) we infer that $M$ is finitely generated.
Therefore, $M$ is a one-relator $G$-module, so $\beta_1^G(M) = 0$ by \corref{NoBeta1For1Relator}.
\end{proof}

The purpose of the following proposition is to show that upon passing to an open subgroup $H$ of $G$, one
can find an $H$-submodule of $\F_p \llb G/U \rrb$ with a vanishing relation gradient, and effectively bounded codimension.
For our inductive argument to work, we use a slightly more general formulation, leaving the case that is of interest for us to the corollary that follows.

\begin{prop} \label{EffProp}

Let $G$ be a nonsolvable pro-$p$ Demushkin group, and let $M$ be a finitely presented $\F_p \llb G \rrb$-module.
Suppose that $M$ has an open $G$-submodule $M_0$ with $\beta_1^G(M_0) = 0$, and let $H$ be an open subgroup of $G$ that acts trivially on $M/M_0$.
Then there exists an $H$-submodule $M'$ of $M$ with
\begin{equation}
\dim_{\F_p} M/M' \leq \beta_1^G(M), \quad \beta_1^H(M') = 0, \quad M_0 \subseteq M'.
\end{equation}

\end{prop}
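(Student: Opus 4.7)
My approach is to prove the statement by induction on $k := \dim_{\F_p}(M/M_0)$, which is finite as $M_0$ is open in $M$. The key input is that $b := \beta_1^G(M)$ is a nonnegative integer by the Atiyah conjecture for nonsolvable Demushkin groups, established in Section 5.2.

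\emph{Base case and easy subcase.} If $k = 0$, then $M = M_0$ and $M' = M$ works: $\beta_1^H(M') = [G:H]\beta_1^G(M_0) = 0$ by \corref{IPCor}. If $k \leq b$, take $M' = M_0$, yielding $\dim M/M' = k \leq b$ and $\beta_1^H(M_0) = 0$.

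\emph{Main inductive case: $k > b \geq 1$.} Since $H$ acts trivially on the nonzero $\F_p$-vector space $W := M/M_0$ while the $G$-action on $W$ factors through a finite $p$-group quotient, the coinvariants $W_G$ are nonzero. Any nonzero functional $W_G \to \F_p$ lifts to a $G$-equivariant surjection $M \twoheadrightarrow \F_p$ with trivial $G$-action (since every $1$-dimensional $\F_p$-representation of a pro-$p$ group is trivial); its kernel $N$ is a codimension-$1$ $G$-submodule of $M$ containing $M_0$. Applying the inductive hypothesis to $(N, M_0, H)$, with $\dim N/M_0 = k - 1$, yields an $H$-submodule $M'$ of $N$ satisfying $M_0 \subseteq M'$, $\dim N/M' \leq \beta_1^G(N)$, and $\beta_1^H(M') = 0$. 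Consequently $\dim M/M' = 1 + \dim N/M' \leq 1 + \beta_1^G(N)$, and this satisfies the required bound provided $\beta_1^G(N) \leq b - 1$.

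\emph{Main obstacle.} The crux is arranging $\beta_1^G(N) < b$ by a careful choice of $N$; \propref{MonProp} only yields $\beta_1^G(N) \leq b$. I would tackle this by examining the long exact sequence of $0 \to N \to M \to \F_p \to 0$ for each open subgroup $K \leq G$, using the Demushkin identity $\dim_{\F_p} H_2(K, \F_p) = 1$ and the vanishing $H_i = 0$ for $i \geq 3$ coming from cohomological dimension~$2$. An Euler characteristic computation produces a dichotomy: either $\beta_1^G(N) \leq b - 1$ (the drop we want), or $\beta_1^G(N) = b$ forcing the rank-gradient identity $\beta_0^G(N) - \beta_0^G(M) = d(G) - 2$. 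Since $d(G) \geq 3$ for a nonsolvable Demushkin group, iterating the latter identity along a descending chain of codim-$1$ $G$-submodules from $M$ to $M_0$ would push $\beta_0^G(M_0) = \beta_0^G(M) + k(d(G)-2)$, contradicting the finite bound $\beta_0^G(M_0) \leq \dim_{\F_p} H_0(G, M_0)$ for sufficiently large $k$. This dimension-counting step, leveraging both the integrality afforded by Atiyah and the one-relator vanishing of \corref{NoBeta1For1Relator}, is the technical heart of the proof, forcing existence of an acceptable $N$ and completing the induction.
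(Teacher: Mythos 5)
Your inductive frame (pass to a codimension-one $G$-submodule $N$ with $M_0 \subseteq N \subseteq M$, note $N$ is finitely presented by \propref{HFPprop}, and use monotonicity plus integrality when $\beta_1^G(N) < \beta_1^G(M)$) is the same as the paper's, but the step you yourself call the technical heart --- forcing a strict drop $\beta_1^G(N) \leq \beta_1^G(M) - 1$ by a careful choice of $N$ --- is exactly where the argument breaks, and it is exactly the case the paper has to handle by a different device rather than rule out. Your proposed contradiction runs: if no drop occurs along a chain of codimension-one $G$-submodules from $M$ down to $M_0$, then $\beta_0^G$ climbs by $d(G)-2$ at each step, so $\beta_0^G(M_0) \geq \beta_0^G(M) + k(d(G)-2)$, which is to contradict $\beta_0^G(M_0) \leq \dim_{\F_p} H_0(G,M_0)$ ``for sufficiently large $k$''. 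But $k = \dim_{\F_p} M/M_0$ is given by the hypothesis, not at your disposal, and the counting never produces a contradiction: from $0 \to M_0 \to M \to M/M_0 \to 0$ and a composition series of $M/M_0$ one gets $\dim_{\F_p} H_0(G,M_0) \leq \dim_{\F_p} H_0(G,M) + k\, d(G)$, which is always at least $\beta_0^G(M) + k(d(G)-2)$ since $\beta_0^G(M) \leq \dim_{\F_p} H_0(G,M)$. So the equality case $\beta_1^G(N) = \beta_1^G(M)$ cannot be excluded this way. A structural warning sign that it cannot be excluded at all: your argument never genuinely uses the hypothesis that $H$ acts trivially on $M/M_0$, and the $M'$ it would produce is a $G$-submodule working for every open $H$ --- a substantially stronger statement than the proposition. (A minor additional gap: your case split ``$k \leq b$'' versus ``$k > b \geq 1$'' omits $b = 0 < k$, though that case is immediate by taking $M' = M$ and \corref{IPCor}.)

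What the paper does in the no-drop case $\beta_1^G(M_1) = \beta_1^G(M)$ is not to drop further but to enlarge: given the $H$-submodule $M_1' \subseteq M_1$ from the inductive step, pick $a \in M \setminus M_1$ and let $M'$ be the $H$-submodule generated by $M_1'$ and $a$. Triviality of the $H$-action on $M/M_1'$ (this is where the hypothesis on $H$ enters) gives $M' \cap M_1 = M_1'$ and $M' + M_1 = M$, hence an exact sequence $0 \to M_1' \to M' \oplus M_1 \to M \to 0$ of $H$-modules; \propref{SaProp} together with \corref{IPCor} applied to the equality $\beta_1^G(M)=\beta_1^G(M_1)$ then forces $\beta_1^H(M') = 0$, while $\dim_{\F_p} M/M' = \dim_{\F_p} M_1/M_1' \leq \beta_1^G(M_1) = \beta_1^G(M)$. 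Your write-up needs this construction (or an equivalent one) for the equality case; as it stands the claimed dichotomy-and-contradiction does not close the induction.
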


\begin{proof}

We induct on the codimension of $M_0$ in $M$, and in the base case $M_0 = M$ we take $M' = M_0$. 
By index-proportionality, as established in \corref{IPCor}, we get that 
\begin{equation}
\beta_1^H(M') = \beta_1^H(M_0) = [G : H]\beta_1^G(M_0) = 0.
\end{equation}

Assume $M_0 \lneq M$, and let $M_1$ be a codimension one $G$-submodule of $M$ that contains $M_0$.
By \propref{HFPprop}, $M_1$ is a finitely presented $G$-module.
We can thus use induction to find an $H$-submodule $M_1'$ of $M_1$ such that
\begin{equation} \label{IndAssumEq}
\dim_{\F_p} M_1/M_1' \leq \beta_1^G(M_1), \quad \beta_1^H(M_1') = 0, \quad M_0 \subseteq M_1'.
\end{equation}
By monotonicity of the relation gradient, as given in \propref{MonProp}, we have $\beta_1^G(M_1) \leq \beta_1^G(M)$.
If this inequality is strict, integrality implies that
\begin{equation}
\beta_1^G(M) \geq \beta_1^G(M_1) + 1
\end{equation}
so we can take $M' = M_1'$. 
We can therefore assume that 
\begin{equation} \label{EqAssEq}
\beta_1^G(M) = \beta_1^G(M_1).
\end{equation}

Recall that $H$ acts trivially on $M/M_1'$, so by picking $a \in M \setminus M_1$ and taking $M'$ to be the $H$-submodule of $M$ generated by $M_1'$ and $a$, we see that
\begin{equation} \label{IntSumEq}
M' \cap M_1 = M_1', \quad M' + M_1 = M.
\end{equation}
It follows that the short exact sequence
\begin{equation}
0 \to M' \cap M_1 \to M' \oplus M_1 \to M' + M_1 \to 0
\end{equation}
of finitely presented $H$-modules, can be rewritten as
\begin{equation}
0 \to M_1' \to M' \oplus M_1 \to M \to 0.
\end{equation}
Using the subadditivity of the relation gradient in short exact sequences, obtained in \propref{SaProp}, 
and recalling equation \eqref{IndAssumEq}, we find that
\begin{equation} \label{ALCEq}
\beta_1^H(M') + \beta_1^H(M_1) = \beta_1^H(M' \oplus M_1) \leq \beta_1^H(M_1') + \beta_1^H(M) = \beta_1^H(M).
\end{equation}
Applying the index-proportionality of \corref{IPCor} to equation \eqref{EqAssEq} gives
\begin{equation}
\beta_1^H(M) = \beta_1^H(M_1).
\end{equation}
Plugging this into inequality \eqref{ALCEq} shows that
\begin{equation}
\beta_1^H(M') + \beta_1^H(M_1) \leq \beta_1^H(M_1)
\end{equation}
so $\beta_1^H(M') = 0$.
At last, from equations \eqref{IntSumEq}, \eqref{IndAssumEq}, \eqref{EqAssEq} we get that
\begin{equation}
\dim_{\F_p} M/M' = \dim_{\F_p} M_1/M_1' \leq \beta_1^G(M_1) = \beta_1^G(M) 
\end{equation}
completing the induction and the proof.
\end{proof}

\begin{cor} \label{VVCor}

Let $G$ be a nonsolvable pro-$p$ Demushkin group, and let $U$ be a finitely generated subgroup of $G$.
Then there exists an open subgroup $H$ of $G$ and an $H$-submodule $N$ of the $G$-module $\F_p \llb G/U \rrb$ with 
\begin{equation}
\dim_{\F_p} \F_p \llb G/U \rrb \big/ N \leq \beta_1^G \big( \F_p \llb G/U \rrb \big), \quad \beta_1^H(N) = 0.
\end{equation}

\end{cor}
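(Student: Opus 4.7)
The plan is to simply combine \propref{IneffProp} with \propref{EffProp}, after verifying that $\F_p \llb G/U \rrb$ meets the hypotheses needed to invoke the latter.

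First, I would set $M \defeq \F_p \llb G/U \rrb$ and check that $M$ is a finitely presented $\F_p \llb G \rrb$-module. By Shapiro's lemma we have $H_0(G, M) \cong H_0(U, \F_p) = \F_p$ and $H_1(G, M) \cong H_1(U, \F_p)$, whose dimension over $\F_p$ equals $d(U)$ and is therefore finite since $U$ is finitely generated. Hence $M$ is finitely generated and finitely related, i.e.\ finitely presented.

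Next, I would apply \propref{IneffProp} to produce an open $G$-submodule $M_0 \subseteq M$ with $\beta_1^G(M_0) = 0$. Because $M_0$ is open in $M$, the quotient $M/M_0$ is a finite $\F_p \llb G \rrb$-module, so the kernel of the $G$-action on $M/M_0$ is an open normal subgroup of $G$; call it $H$. By construction $H$ acts trivially on $M/M_0$.

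Now the hypotheses of \propref{EffProp} are satisfied for this triple $(M, M_0, H)$, so we obtain an $H$-submodule $M'$ of $M$ with $M_0 \subseteq M'$, $\beta_1^H(M') = 0$, and $\dim_{\F_p} M/M' \leq \beta_1^G(M)$. Taking $N \defeq M'$ yields the corollary. The whole argument is essentially a packaging step: the only thing to verify is finite presentedness of $\F_p \llb G/U \rrb$, which I expect to be the only (very minor) obstacle.
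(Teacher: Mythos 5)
Your proposal is correct and follows essentially the same route as the paper: apply \propref{IneffProp} to get the open submodule $M_0$ with $\beta_1^G(M_0)=0$, take $H$ to be the (open) kernel of the $G$-action on the finite quotient $\F_p \llb G/U \rrb / M_0$, and feed this into \propref{EffProp}. Your explicit check via Shapiro's lemma that $\F_p \llb G/U \rrb$ is finitely presented over $G$ is a correct verification of a hypothesis the paper leaves implicit.
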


\begin{proof}

\propref{IneffProp} provides us with an open $G$-submodule $M_0$ of $\F_p \llb G/U \rrb$ with $\beta_1^G(M_0) = 0$.
Set $V \defeq \F_p \llb G/U \rrb \big/ M_0$ and let
\begin{equation}
\rho \colon G \to \mathrm{GL}(V)
\end{equation}
be the homomorphism associated to the $G$-module structure on $V$. Put
\begin{equation}
H \defeq \mathrm{Ker}(\rho)
\end{equation}
and note that $H$ is an open subgroup of $G$ that acts trivially on $V$.
Now just invoke \propref{EffProp} with $M = \F_p \llb G/U \rrb$ and take $N = M'$.
\end{proof}

\section{Submultiplicativity}

By \corref{Reason}, the strengthened Hanna Neumann conjecture (as stated in \thmref{FirstRes}) is tantamount to the submultiplicativity 
\begin{equation} \label{SMEq}
\beta_1^G \big(  \F_p \llb G/U \rrb \ \widehat\otimes_{\F_p} \ \F_p \llb G/W \rrb \big ) \leq 
\beta_1^G \big( \F_p \llb G/U \rrb \big )\beta_1^G \big( \F_p \llb G/W \rrb \big )
\end{equation}
of the relation gradient (recall that $[G:U], [G : W]$ are infinite). This inequality is established herein.

\begin{proof}

\corref{VVCor} provides us with an $H \leq_o G$ and an $H$-submodule $N$ of $\F_p \llb G/U \rrb$ such that
\begin{equation} \label{FromPrevCorEq}
\dim_{\F_p} \F_p \llb G/U \rrb \big/ N \leq \beta_1^G \big( \F_p \llb G/U \rrb \big), \quad \beta_1^H(N) = 0.
\end{equation}
By the index-proportionality of the relation gradient from \corref{IPCor},
inequality \eqref{SMEq} is readily equivalent to the inequality
\begin{equation} \label{SMREq}
\beta_1^H \big(  \F_p \llb G/U \rrb \ \widehat\otimes_{\F_p} \ \F_p \llb G/W \rrb \big ) \leq 
\beta_1^G \big( \F_p \llb G/U \rrb \big )\beta_1^H \big( \F_p \llb G/W \rrb \big ).
\end{equation}
The subadditivity of the relation gradient in exact sequences, established in \propref{SaProp}, allows us to bound the left hand side of equation \eqref{SMREq} by
\begin{equation} \label{GWNEq}
\beta_1^H \big(  N \ \widehat\otimes_{\F_p} \ \F_p \llb G/W \rrb \big ) + 
\beta_1^H \big(  \F_p \llb G/U \rrb \big/ N \ \widehat\otimes_{\F_p} \ \F_p \llb G/W \rrb \big ).
\end{equation}

Consider the second summand in equation \eqref{GWNEq}.
By equation \eqref{FromPrevCorEq}, the $H$-module $\F_p \llb G/U \rrb \big/ N$ has a filtration of length at most $\beta_1^G \big( \F_p \llb G/U \rrb \big)$ with one-dimensional consecutive quotients.
Consequently, the $H$-module
\begin{equation} 
\F_p \llb G/U \rrb \big/ N \ \widehat\otimes_{\F_p} \ \F_p \llb G/W \rrb
\end{equation}
has a filtration of length at most $\beta_1^G \big( \F_p \llb G/U \rrb \big)$ 
with quotients isomorphic to $\F_p \llb G/W \rrb$. 
Hence, an inductive application of \propref{SaProp} gives
\begin{equation}
\beta_1^H \big(  \F_p \llb G/U \rrb \big/ N \ \widehat\otimes_{\F_p} \ \F_p \llb G/W \rrb \big ) \leq
\beta_1^G \big( \F_p \llb G/U \rrb \big) \beta_1^H \big( \F_p \llb G/W \rrb \big)
\end{equation}
which coincides with the right hand side of \eqref{SMREq}.
Thus, in order to prove inequality \eqref{SMREq} it suffices to show that the first summand in \eqref{GWNEq} vanishes.

By \corref{HomRefCor}, the $G$-module $\F_p \llb G/W \rrb$ is finitely related over $U$,
so by \lemref{VirtuallyFreeLem} there exists an open $G$-submodule $M$ of $\F_p \llb G/W \rrb$ with
\begin{equation} \label{b1um0Eq}
\beta_1^U(M) = 0.
\end{equation}
By the subadditivity of the relation gradient from \propref{SaProp}, we have
\begin{equation}
\beta_1^H \big(  N \ \widehat\otimes_{\F_p} \ \F_p \llb G/W \rrb \big ) \leq
\beta_1^H \big(  N \ \widehat\otimes_{\F_p} \ M \big ) +
\beta_1^H \big(  N \ \widehat\otimes_{\F_p} \ \F_p \llb G/W \rrb \big/ M \big ).
\end{equation}
For the second summand above, 
upon repeating the filtration argument from the preceding paragraph, 
we conclude from \propref{SaProp} that 
\begin{equation}
\beta_1^H \big(  N \ \widehat\otimes_{\F_p} \ \F_p \llb G/W \rrb \big/ M \big ) \leq
\dim_{\F_p} \F_p \llb G/W \rrb \big/ M \cdot \beta_1^H \big(  N \big )
\end{equation}
and the right hand side vanishes in view of equation \eqref{FromPrevCorEq}.
Hence, we are left with the task of showing that $\beta_1^H (  N \ \widehat\otimes_{\F_p} \ M)$ vanishes.

By \eqref{FromPrevCorEq} the $H$-module $\F_p \llb G/U  \rrb \big/ N \ \widehat\otimes_{\F_p} \ M$ admits a (finite) filtration with consecutive quotients isomorphic to $M$.
By \corref{MonH2Cor} we have
\begin{equation}
\dim_{\F_p} H_2 \big( H, M \big) \leq \dim_{\F_p} H_2 \big( H, \F_p \llb G/W \rrb \big)
\end{equation}
and this vanishes in view of equation \eqref{Eq2}.
We conclude that
\begin{equation}
H_2 \big( H, \F_p \llb G/U  \rrb \big/ N \ \widehat\otimes_{\F_p} \ M \big) = 0.
\end{equation}
We can thus invoke \propref{MonProp} to get that
\begin{equation}
\beta_1^H \big(  N \ \widehat\otimes_{\F_p} \ M \big) \leq 
\beta_1^H \big( \F_p \llb G/U \rrb \ \widehat\otimes_{\F_p} \ M \big)
\end{equation}
so by index-proportionality from \corref{IPCor} it suffices to show that
\begin{equation}
\beta_1^G \big( \F_p \llb G/U \rrb \ \widehat\otimes_{\F_p} \ M \big) = 0.
\end{equation}
This follows from equation \eqref{b1um0Eq} and Shapiro's lemma (for the relation gradient) as obtained in \propref{b1Shap}.
\end{proof}

\section*{Acknowledgments}
This paper is partially supported by the Spanish MINECO through the grants MTM2014-53810-C2-01,  MTM2017-82690-P and the ``Severo Ochoa" program for Centres of Excellence  (SEV-2015-0554).
Mark Shusterman is grateful to the Azrieli Foundation for the award of an Azrieli Fellowship.
The second author was partially supported by a grant of the Israel Science Foundation with cooperation of UGC no. 40/14.
We thank the referee whose remarks and suggestions substantially improved the article.


\begin{thebibliography}{100}

\bibitem{BLMS} D. Benson, N. Lemire, J. Minac, J. Swallow, \emph{Detecting pro-$p$-groups that are not absolute Galois groups}, Crelles J. (613), 175-191, 2007.

\bibitem{BLLS} N. Bergeron, P. Linnell, W. L\"{u}ck, R. Sauer, \emph{On the growth of Betti numbers in
$p$-adic analytic towers}, Groups Geom. Dyn. 8, 311-329, 2014.

\bibitem{B0} G. Bockle, \emph{Demuskin groups with group actions and applications to deformations of Galois representations}, Compos. Math. 121(2), 109-154, 2000.

\bibitem{B}  G. Bockle, \emph{Deformation rings for some mod $3$ Galois representations of the absolute Galois group of $\mathbb{Q}_3$}, Asterisque, 330, 529-542, 2010.


\bibitem{DL} D. Dummit, J. Labute, \emph{On a new characterization of Demuskin groups}, Invent. math. 73, 3, 413-418, 1983.

\bibitem{Ef2} I. Efrat, \emph{Demushkin fields with valuations}, Math. Zeit. 243(2), 333-353, 2003.


\bibitem{F} J. Friedman, \emph{Sheaves on Graphs, Their Homological Invariants, and a Proof of the Hanna Neumann Conjecture}, with an appendix by W. Dicks, 233, 1100, Amer. Math. Soc. 2015.

\bibitem{Hows} A. G. Howson, \emph{On the intersection of finitely generated free groups}, J. London Math. Soc. 29, 428-434, 1954.

\bibitem{J} A. Jaikin-Zapirain, \emph{Approximation by subgroups of finite index and the Hanna Neumann conjecture}, Duke Math. J. 166, 1955-1987, 2017.

\bibitem{J2} A. Jaikin-Zapirain, \emph{$L^2$-Betti numbers and their analogues in positive characteristic},  Groups St Andrews 2017 in Birmingham, 346--406, London Math. Soc. Lecture Note Ser., 455, Cambridge Univ. Press, Cambridge, 2019.

\bibitem{KKL} H. Koch, S. Kukkuk, J. Labute, \emph{Nilpotent local class field theory}, Acta Arit. 83(1), 45-64, 1998.

\bibitem{Koc} D. Kochloukova, \emph{Subdirect products of free pro-$p$ and Demushkin groups}, Int. J. Alg. Comp. 23, 5, 1079-1098, 2013.

\bibitem{KS} D. H. Kochloukova, H. Short,  \emph{On subdirect products of free pro-$p$ groups and Demushkin groups of infinite depth}, J. Alg. 343(1), 160-172, 2011.

\bibitem{KZ0} D. Kochloukova, P. Zalesskii, \emph{Free-by-Demushkin pro-$p$ groups}, Math. Zeit. 249, 4, 731-739, 2005.

\bibitem{KZ1} D. Kochloukova, P. Zalesskii, \emph{Fully residually free pro-p groups}, J. Alg. 324, 4, 782-792, 2010.

\bibitem{KZ2} D. Kochloukova, P. Zalesskii, \emph{On pro-$p$ analogues of limit groups via extensions of centralizers}, Math. Zeit. 267(1-2), 109-128, 2011.

\bibitem{Lab0} J. Labute, \emph{Demushkin groups of rank $\aleph_0$}, Bull. Soc. Math. France, 94, 211-244, 1966.

\bibitem{Lab} J. Labute, \emph{Classification of Demushkin groups}, Canad. J. Math. 19, 1, 106-132, 1967.

\bibitem{LLMS} J. Labute, N. Lemire, J. Minac, J. Swallow, \emph{Demushkin groups, Galois modules, and the elementary type conjecture}, J. Alg. 304(2), 1130-1146, 2006.

\bibitem{Lub} A. Lubotzky, \emph{Combinatorial group theory for pro-p groups}, J. Pure Appl. Alg. 25 (3), 311-325, 1982.

\bibitem{Mel} O. Melnikov, \emph{Subgroups and the homology of free products of profinite groups},
Math. USSR-Izv. 34, 1, 97-119, 1990.

\bibitem{M} I. Mineyev, \emph{Submultiplicativity and the Hanna Neumann Conjecture}, Ann. Math. 175, 1, 393-414, 2012.

\bibitem{MW0} J. Minac, R. Ware, \emph{Demushkin groups of rank $\aleph_0$ as absolute Galois groups}, Man. Math. 73(1), 411-421, 1991.

\bibitem{MW} J. Minac, R. Ware, \emph{Pro-$2$-Demushkin groups of rank $\aleph_0$ as Galois groups of maximal $2$-extensions of fields}, Math. Ann. 292(1), 337-353, 1992.

\bibitem{Nh}  H. Neumann, \emph{On the intersection of finitely generated free groups}, Pub. Math. Debr. 4, 186-189, 1956.

\bibitem{Nw} W. D. Neumann, \emph{On intersections of finitely generated subgroups of free groups}, Group-Canberra, 161-170, Springer, Berlin, Heidelberg, 1989.

\bibitem{NSW} J. Neukirch, A. Schmidt, K. Wingberg, \emph{Cohomology of number fields}, 323, Springer Science $\&$ Business Media, 2013.

\bibitem{RZ} L. Ribes, P. Zalesskii, \emph{Profinite Groups}, A Series of Modern Surveys in Mathematics, Springer-Verlag Berlin Heidelberg, 2010.

\bibitem{SerD} J. P. Serre, \emph{Structure de certains pro-$p$-groupes} Seminaire Bourbaki, 8, 145-155, 1962.

\bibitem{Ser} J. P. Serre, \emph{Galois cohomology}, Springer Science $\&$ Business Media, 2013.

\bibitem{Sh} M. Shusterman, \emph{Ascending chains of finitely generated subgroups}, J. Alg. 471, 240-250, 2017.

\bibitem{ShZ} M. Shusterman, P. Zalesskii, \emph{Virtual retraction and Howson's theorem in pro-$p$ groups}, preprint, 2017.

\bibitem{SZ} I. Snopce, P. Zalesskii, \emph{Subgroup properties of Demushkin groups}, Math. Proc. Camb.
Phil. Soc. 160, 1, 1-9, 2016.

\bibitem{Som} T. Soma, \emph{Intersection of finitely generated surface groups}, J. Pure App. Alg. 66, 1, 81-95, 1990.

\bibitem{Son} J. Sonn, \emph{Epimorphisms of Demushkin groups}, Isr. J. Math. 17.2, 176-190, 1974.


\bibitem{Wing} K. Wingberg, \emph{On Demushkin groups with involution}, Ann. Sci. l'Ecole Norm. Sup. 22, 4, 555-567, 1989.

\bibitem{Y} M. Yamagishi, \emph{On the number of Galois $p$-extensions of a local field}, Proc. AMS, 123(8), 2373-2380, 1995.

\end{thebibliography}
\end{document}